\documentclass[11pt,usenames,dvipsnames,reqno]{amsart} %For printing
\usepackage[margin=1.25in]{geometry} %For screen with standard font
\usepackage{xypic}
\usepackage{amsmath}
\usepackage{amssymb}
\usepackage{amsthm}
\usepackage[colorlinks=true,linkcolor=NavyBlue,citecolor=NavyBlue,anchorcolor=NavyBlue,urlcolor=NavyBlue]{hyperref}
\usepackage{graphicx}
\usepackage{enumitem}
\usepackage{tikz}
\usepackage{mdframed}

\usepackage{yfonts}
\usepackage[T1]{fontenc}

%Shortcuts

\newcommand{\rr}{\mathbb{R}}
\renewcommand{\sl}{\mathrm{sl}}
\newcommand{\tb}{\mathrm{tb}}
\newcommand{\st}{{\mathrm{st}}}
\newcommand{\id}{\operatorname{id}}
\newcommand{\pr}{\operatorname{pr}}

\newcommand{\mgraph}{\mathcal{T}}
\newcommand{\wire}{w}
\newcommand{\Skel}{\textsc{Skel}}
\newcommand{\Coskel}{\textsc{Coskel}}

%Environments
\newtheoremstyle{thm}{15 pt}{10 pt}{\itshape}{}{\bfseries}{.}{.5em}{}
\newtheorem{thm-main}{Theorem}
\newtheorem{cor-main}[thm-main]{Corollary}
\newtheorem{theorem}{Theorem}
\numberwithin{theorem}{section}
\newtheorem{corollary}[theorem]{Corollary}

\newtheorem{question}[theorem]{Question}
\newtheorem{lemma}[theorem]{Lemma}
\newtheorem{conjecture}[theorem]{Conjecture}
\newtheorem{proposition}[theorem]{Proposition}

\newtheorem*{thm*}{Theorem}
\newtheorem*{cor*}{Corollary}

\newtheoremstyle{ex}{10 pt}{10 pt}{\normalfont}{}{\bfseries}{.}{.5em}{}
\theoremstyle{ex}
\newtheorem{example}[theorem]{Example}
\newtheorem{definition}[theorem]{Definition}

\newtheoremstyle{rem}{10 pt}{5 pt}{\normalfont}{}{\it}{.}{.5em}{}
\theoremstyle{rem}
\newtheorem{remark}[theorem]{Remark}

\numberwithin{equation}{section}

\usepackage{color}

\begin{document}

\thispagestyle{empty}
\title[Legendrian ribbons and strongly quasipositive links]{Legendrian ribbons and strongly quasipositive links in an open book}
\author[K. Hayden]{Kyle Hayden} \address{Boston College, Chestnut Hill, MA 02467} \email{kyle.hayden@bc.edu}

\maketitle

\begin{abstract}  
We show that a link in an open book can be realized as a strongly quasipositive braid if and only if it bounds a Legendrian ribbon with respect to the associated contact structure.  This generalizes a result due to Baader and Ishikawa for links in the three-sphere. We highlight some related techniques for determining whether or not a link is strongly quasipositive, emphasizing applications to  fibered links and satellites.    
 \end{abstract}

\section{Introduction} \label{sec:intro}

Among all common families of links in $S^3$, the strongly quasipositive braids defined by Rudolph \cite{rudolph:kauffman-bound} are perhaps the most intimately connected to contact geometry.  For example, Hedden \cite{hedden:pos} showed that a fibered link in $S^3$ is strongly quasipositive if and only if its open book supports the tight contact structure on $S^3$.  These so-called tight fibered knots include algebraic knots and all knots admitting lens space (or even L-space) surgeries \cite{ni:fibered,hedden:pos}, and it is conjectured that tight fibered knots are linearly independent in the concordance group \cite{baker:tight-fibered, rudolph:concordance}.  More generally, strongly quasipositive links achieve equality in the Bennequin and slice-Bennequin bounds on Seifert and slice genera, and it is conjectured that they are the \emph{only} links for which the Bennequin bound is sharp.

A more elementary connection to contact geometry was established by Baader and Ishikawa in \cite{bi:leg-qp}: the strongly quasipositive links are precisely those links that bound Legendrian ribbons in $(S^3,\xi_\st)$, where a \textbf{Legendrian ribbon} in a contact 3-manifold $(Y,\xi)$ is a surface $R$ that retracts onto some Legendrian graph $\Lambda \subset R$ under a flow tangent to $\xi \cap TR$. These surfaces may be of independent interest for their role in Giroux's correspondence \cite{giroux:geometry} and the fact that they minimize both Seifert and slice genera in $(S^3,\xi_\st)$. 

Recently, the notion of strong quasipositivity was extended to braids in arbitrary open books \cite{square,hayden:stein,hedden:subcritical,ik:bennequin}. Just as in the classical setting, this notion exhibits a close connection to contact geometry.  Our main theorem generalizes the result of Baader and Ishikawa to arbitrary contact 3-manifolds and compatible open books.

\begin{thm-main}\label{thm:sqp-ribbon}
A link in an open book is strongly quasipositive if and only if it bounds a Legendrian ribbon in the associated contact structure.
\end{thm-main}

The forward implication uses a version of Giroux flexibility for surfaces with transverse boundary. The reverse implication follows an approach inspired by Baader and Ishikawa, made possible by Gay and Licata's development of Morse structures on open books \cite{g-l:morse}. As an immediate consequence of Theorem~\ref{thm:sqp-ribbon}, we see that the condition of strong quasipositivity is fundamentally contact-geometric and independent of the choice of compatible open book.

\begin{remark}
After distributing a draft of this paper, I learned that Theorem~\ref{thm:sqp-ribbon} had also been proven in a different manner by Baykur, Etnyre, Hedden, Kawamuro, and Van Horn-Morris. Their proof, obtained independently, will appear in forthcoming joint work.
\end{remark}

We apply these ideas in several directions.  First, we can use Legendrian ribbons to obtain strong constraints on genera of surfaces bounded by strongly quasipositive links. The most basic result in this direction,  observed in \cite{ik:bennequin,hayden:stein}, is that strongly quasipositive links achieve equality in the Bennequin-Eliashberg bound; see \S\ref{subsec:genus}.  This is applied, for example, to study cables of strongly quasipositive knots (in Corollary~\ref{cor:cables}). In \S\ref{subsec:destab}, we study the effect of braid operations on other  notions of 
quasipositivity using a more powerful constraint, one that generalizes a result due to Rudolph for knots in $S^3$: 
\emph{If $(Y,\xi)$ has a symplectic filling $X$ with vanishing second homology, then a nontrivial strongly quasipositive knot in $Y$ cannot bound a slice disk in $X$.}

 Second, we can use Legendrian ribbons to certify that a given link can be represented by a strongly quasipositive braid. This is particularly useful for studying satellites, where a familiar procedure for producing Seifert surfaces of satellite knots is  compatible with Legendrian ribbons. 

\begin{thm-main}\label{thm:satellite}
If $J \subset S^1 \times D^2$ is a strongly quasipositive braid and $K \subset (Y,\xi)$ is a strongly quasipositive  link, then the  satellite $J(K) \subset (Y,\xi)$ is also a strongly quasipositive  link.
\end{thm-main}

\noindent This approach can also be used to demonstrate the strong quasipositivity of sufficiently twisted Whitehead doubles (Example~\ref{ex:doubles}) and of patterns and companions of fibered strongly quasipositive knots in a tight contact manifold (Theorem~\ref{thm:fibered} and Corollary~\ref{cor:satellite}).

Third, motivated by the importance of tight fibered knots in $S^3$, we can ask about the relationship between strong quasipositivity and fiberedness for knots in a tight contact manifold $(Y,\xi)$. Theorem~\ref{thm:sqp-ribbon} implies that a fibered transverse knot is strongly quasipositive with respect to any open book supporting the same contact structure; see \S\ref{subsec:fibered}. It is illuminating to merge this observation with a result  of  Etnyre and Van Horn-Morris:

\begin{cor-main}[cf.~\cite{hedden:pos,E-VHM:fibered}] \label{cor:fibered}
Let $K$ be a fibered link in a tight contact manifold $(Y,\xi)$ with zero Giroux torsion supported by an open book $(B,\pi)$. The following are equivalent:
\begin{enumerate}
\item $K$ is transversely isotopic to a strongly quasipositive braid with respect to $(B,\pi)$;
\item $K$ bounds a Legendrian ribbon in $(Y,\xi)$;
\item the contact structure $\xi_K$ supported by $K$ is isotopic to $\xi$;
\item \label{item:bennequin} $K$ achieves equality in the Bennequin-Eliashberg bound.
\end{enumerate}
\end{cor-main}

Finally, Theorem~\ref{thm:sqp-ribbon} reveals the weakness  of strong quasipositivity in an overtwisted contact structure. Indeed, a result of Baader, Cieliebak, and Vogel  says that \emph{every} nullhomologous link type in an overtwisted contact structure bounds a Legendrian ribbon. 

\begin{cor-main}[cf. \cite{bcv:ribbons}]
In an overtwisted contact structure, every nullhomologous link type is strongly quasipositive. \hfill $\square$
\end{cor-main}

This corollary serves as a dramatic demonstration of the principle that strong quasipositivity is fundamentally contact-geometric.

\subsection*{Organization} We recall the necessary background material in \S\ref{sec:prelim}, with an emphasis on Morse structures on open books and front projections of Legendrian ribbons. The main theorem and applications are proven in \S\ref{sec:ribbon-qp} and \S\ref{sec:applications}, respectively.

\subsection*{Acknowledgements} I wish to thank Sebastian Baader for an inspiring correspondence about the results of \cite{bi:leg-qp} and \cite{bcv:ribbons}, as well as John Baldwin, John Etnyre,  David Gay, and Eli Grigsby for helpful conversations. Thanks also to the members of the AIM SQuaRE on contact and symplectic geometry and the mapping class groups (\cite{square}) for their support.

\section{Preliminaries}\label{sec:prelim}

\subsection{Open books and quasipositivity} A celebrated theorem of Giroux \cite{giroux:geometry} (building on work of Thurston-Winkelnkemper \cite{tw:existence}) establishes a one-to-one correspondence between open books and contact structures on any closed, orientable 3-manifold $Y$; see \cite{etnyre:obd} for a thorough account. There is an analogous correspondence between links that are braided with respect to the pages of an open book and links that are transverse to the planes of a contact structure; see, for example, \cite{bennequin,pavelescu:thesis,os:markov}. We also see Giroux's correspondence reflected in the relationship between singular foliations on embedded surfaces in $Y$ induced by open books and by contact structures; see \cite{ito-kawamuro:open} for background.

A \textbf{Bennequin surface} for a braid in an open book $(F,\varphi)$ is a Seifert surface formed from disks and half-twisted bands, where the disks are meridional for the binding and each band is attached along an embedded arc $\alpha$ lying in a single page $F_\theta$; Figure~\ref{fig:bennequin} depicts an example where the page is a half-plane. The band has exactly one point where it is tangent to the page $F_\theta$, and we say the band is \textbf{positive} or \textbf{negative} according to whether the orientation of the Seifert surface agrees or disagrees, respectively, with the orientation of the page at this tangency. Equivalently, we can define Bennequin surfaces in terms of their open book foliations, which consist only of $aa$-tiles; see the definitions and background in \cite{ik:bennequin}.

\begin{definition} \label{def:sqp}
A braid in an open book is \textbf{strongly quasipositive} if it bounds a Bennequin surface with only positive bands.
\end{definition}

We can extend this definition to transverse links using the correspondence discussed above so that a transverse link is strongly quasipositive if it is transversely isotopic to a strongly quasipositive braid with respect to some compatible open book.

\begin{figure}\center
\def\svgwidth{300pt} 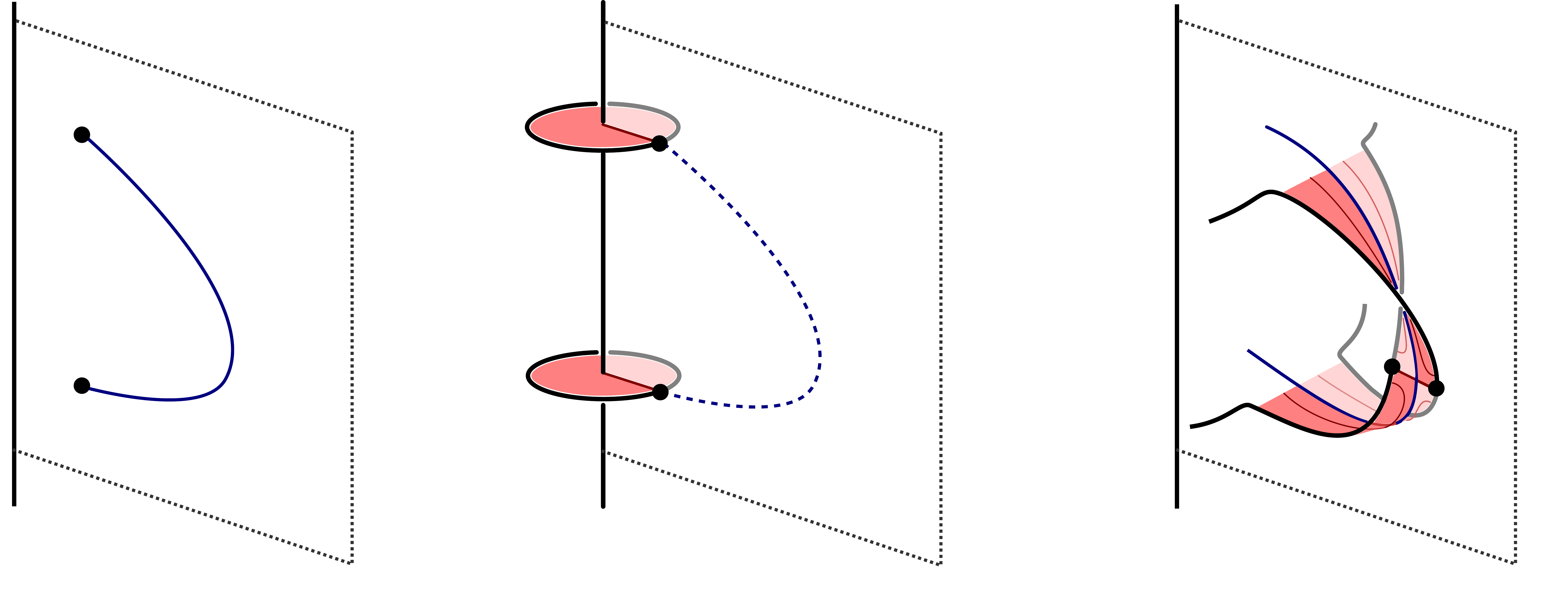 
\caption{Building a Bennequin surface along a core arc $\alpha$ in a page $F_\theta$.} \label{fig:bennequin}
\end{figure}

\subsection{Morse structures on open books} \label{subsec:morse} 
Here we collect the necessary definitions and results from Gay and Licata's study of Morse structures on open books; all material in this subsection is drawn from \cite{g-l:morse}. Define a contact 3-manifold $(W_0,\xi_0=\ker \alpha_0)$ by
$$W_0=(0, \infty) \times S^1 \times S^1, \quad \alpha_0 = dz + x \, dy,$$
where $x,y,z$ are coordinates on the three factors of $W$.

\begin{theorem}[Gay-Licata]\label{thm:contacto}
Let $Y$ be a closed 3-manifold with open book $(B,\pi)$. Then there is a contact structure $\xi$ on $Y$ compatible with the open book and a 2-complex $\Skel \subset Y \setminus B$ such that every connected component of $(Y \setminus (\Skel \cup B),\xi)$ is contactomorphic to $(W_0,\xi_0)$. \end{theorem}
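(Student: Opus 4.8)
The plan is to construct the contact structure $\xi$ and the skeleton $\Skel$ hand in hand, so that the decomposition into standard pieces is built in, and then to check that the resulting data really is a supporting contact structure. First I would read off the geometry of the model: on $W_0=(0,\infty)_x\times S^1_y\times S^1_z$ one computes $d\alpha_0=dx\wedge dy$ and Reeb field $R_{\alpha_0}=\partial_z$, so the tori $\{z=\mathrm{const}\}$ are pages carrying the positive area form $d\alpha_0$, the field $\partial_z$ is positively transverse to them, and $\alpha_0|_{\{z=c\}}=x\,dy$ is an action--angle (Liouville) primitive. Thus $W_0$ is to be read as (half-open annular page)$\,\times\,$(fibration circle $S^1_z$), with $x$ a radial coordinate running from the singular locus of the page foliation at $x\to 0$ out toward the binding at $x\to\infty$, and $y$ the angular coordinate around the annulus; the characteristic foliation $\ker\alpha_0\cap T\{z=c\}$ is exactly the radial foliation $\{y=\mathrm{const}\}$.

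To globalize, I would choose a wire $\wire\subset F$: a spine onto which the page deformation-retracts, taken generic and compatible with the monodromy $\varphi$ (either $\varphi$-invariant up to isotopy, or carried by a smooth $\theta$-family $\wire_\theta$ with $\wire_{\theta+1}=\varphi(\wire_\theta)$). Since $\varphi$ may be taken to be the identity near $B=\partial F$ and $\wire$ lies in the interior, the complement $F\smallsetminus\wire$ is a disjoint union of half-open annular collars of the binding components, each an annulus $(0,\infty)_x\times S^1_y$. Sweeping $\wire$ around the fibration direction produces the $2$-complex $\Skel\subset Y\smallsetminus B$, and $Y\smallsetminus(\Skel\cup B)$ is correspondingly a disjoint union of mapping-torus pieces of these annuli; using that the monodromy is the identity near $B$, each such piece is an annulus-bundle over $S^1_z$ that deformation-retracts onto a torus and is hence diffeomorphic to $(0,\infty)\times S^1\times S^1$. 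This identifies the topology of every complementary component with that of $W_0$.

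It then remains to match the contact data. Starting from any Thurston--Winkelnkemper form $\alpha$ supporting $(B,\pi)$, I would isotope it so that on every page the singular foliation $\ker\alpha\cap TF_\theta$ is Morse--Smale with singular/(un)stable set exactly $\wire_\theta$, so that its leaves run from $\wire_\theta$ out to $B$. Writing $\alpha=a\,d\theta+\beta_\theta$ with $\beta_\theta=\alpha|_{F_\theta}$, the form $\beta_\theta$ is a Liouville primitive of the page area form $d\beta_\theta$ on the annulus $F_\theta\smallsetminus\wire_\theta$; an action--angle (Moser-type) normalization puts $\beta_\theta=x\,dy$, while integrating along the Reeb flow provides a coordinate $z$ (a reparametrized fibration angle) in which the transverse part of $\alpha$ becomes $dz$. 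Together these give coordinates in which $\alpha=dz+x\,dy$ on each piece, and a Gray/Moser stability argument upgrades the coordinate match to an honest contactomorphism onto $(W_0,\xi_0)$.

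I expect the main obstacle to be exactly this semi-global normalization on the non-compact pieces, carried out compatibly across the open book. Unlike the local Darboux theorem, the model must hold all the way out to two non-compact ends that cannot be tamed by a compactness argument: the limit $x\to 0$, where the contact plane becomes tangent to the page and the Liouville form degenerates along $\Skel$, and the limit $x\to\infty$, where the piece must match the standard model near the binding. Arranging a single (monodromy-compatible) wire, a contact form whose page foliation has precisely that (un)stable locus, and action--angle coordinates that are smooth in $\theta$ and controlled at both ends---all simultaneously---is the delicate point. The remaining items to nail down are that these normalizations glue smoothly across $\Skel$ and near $B$, and that the structure so produced is the Giroux structure supported by $(B,\pi)$ rather than some other compatible one, which follows from the fact that all supporting contact structures are isotopic.
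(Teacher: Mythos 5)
You should first note that the paper does not prove this statement at all: it is quoted from Gay--Licata \cite{g-l:morse}, and the surrounding text only records the ingredients needed later (a vector field $V$ tangent to the pages and to the characteristic foliation, with a single source and no sinks on each page; $\Skel$ defined as the descending manifolds of the index-$1$ singularities together with the sources; the contactomorphism to $(W_0,\xi_0)$ built from the flow of $V$). Measured against that construction, your outline has the right architecture (a skeleton per page, annular complements flowing to the binding, normalization to $dz+x\,dy$) but a genuine gap at its foundation: you take the skeleton to be a single spine $\wire$ swept around the fibration, assumed ``$\varphi$-invariant up to isotopy'' or carried by a smooth family $\wire_\theta$ of constant type with $\wire_{\theta+1}=\varphi(\wire_\theta)$. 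For general monodromy no such family exists --- on a once-punctured torus, for instance, a Dehn twist does not preserve the isotopy class of any wedge-of-two-circles spine --- so any $\theta$-family of spines must pass through finitely many critical events (edge slides, i.e.\ saddle--saddle connections of the page vector field). These events are precisely what produce the paired trivalent vertices of $\mgraph$ in condition (iv) of the abstract Morse diagram definition, and at the critical slices your description of the complementary pieces as annulus bundles over $S^1$ breaks down; Gay--Licata obtain the product structure and the contactomorphism simultaneously from the flow of $V$, which works across these slices, whereas your ``deformation retracts onto a torus, hence diffeomorphic to $(0,\infty)\times S^1\times S^1$'' is not by itself a valid inference.

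Two further points. Your reading of the model transposes the coordinates relative to the actual construction: there $y$ is the page angle $\theta$ and pages correspond to $\{y=\mathrm{const}\}$ (which is why fronts have negative slope $dz/d\theta=-x$ in Definition~\ref{def:front}); on these model pages the Reeb field $\partial_z$ is \emph{tangent} and $d\alpha_0$ restricts to zero, so $\alpha_0$ is not in Thurston--Winkelnkemper form there. Your version, with pages $\{z=\mathrm{const}\}$ (these are half-open annuli, not tori, as you write), is related to the standard one by the contactomorphism $(x,y,z)\mapsto(1/x,z,y)$, so it would still prove the statement as written, but it is incompatible with the front-projection conventions the rest of the paper relies on. Finally, the semi-global Moser/action--angle normalization at the two non-compact ends --- which you correctly identify as the delicate point --- is flagged but not carried out, and it is harder in your setup because you start from an arbitrary given supporting form $\alpha$; Gay--Licata avoid this by constructing $\xi$ directly from the Morse structure, exploiting the freedom that the theorem only asserts existence of \emph{some} compatible contact structure, a freedom your closing paragraph mentions but your argument does not use.
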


Under the contactomorphism in Theorem~\ref{thm:contacto}, we can identify the boundary of a neighborhood of each component of $B$ with $\{1\} \times S^1 \times S^1$. The contactomorphism is constructed using the flow of a certain vector field $V$ on $Y \setminus B$; see \cite[Definition~1.2]{g-l:morse} for details. For our purposes, it suffices to note that the restriction of $V$ to each page $F_\theta$ is a vector field $V_\theta$ that has a single source, no sinks, and is tangent to the characteristic foliation.

The \textbf{skeleton} $\Skel\subset Y \setminus B$ is the union over all pages of the descending manifolds of the index-1 singularities of $V_\theta$, together with the index-0 singularity on each page. The \textbf{co-skeleton}, denoted $\Coskel$, is the union over all pages of the ascending manifolds of the index-1 singularities. The 2-complex $\Coskel$ intersects the boundary $\amalg^n S^1 \times S^1$ of a regular neighborhood of the binding $B$ in a trivalent graph $\mgraph$, and the isotopy type of this graph determines the original  open book $(Y,B,\pi)$ up to diffeomorphism. This collection of decorated tori is called a \textbf{Morse diagram} for $(Y,B,\pi)$. 

Morse diagrams can also be considered abstractly: An \textbf{abstract Morse diagram} is a collection of tori $\amalg^n S^1 \times S^1$ with a finite trivalent graph $\mgraph$ such that
\begin{enumerate}[label=(\roman*)]
\item the edges of $\mgraph$ are monotonic with respect to the second $S^1$ factor;
\item for each fixed value $c$ of the second factor, there is a pairing on curves intersecting $\amalg^n S^1 \times \{c\}$, and the pairing is constant away from vertices;
\item surgery on $\amalg^n S^1 \times \{c\}$ with attaching spheres given by paired points on the curves yields a single $S^1$; and
\item trivalent points occur in pairs on the same slice $\amalg^n S^1 \times \{c\}$. As $t \to c^-$, a curve labeled $x$ approaches a curve labeled $y$ from the left (respectively, right), while as $t \to c^+$, a curve labeled $x$ approaches the other $y$-curve from the right (left).
\end{enumerate}

\begin{example}\label{ex:morse}
Three abstract Morse diagrams are shown in Figure~\ref{fig:ex-diagram}, adapted from \cite[Figure 1]{g-l:morse} to illustrate the difference in our conventions: \textbf{(a)} An annular open book whose monodromy consists of two right-handed Dehn twists about the core curve, yielding the lens space $L(2,1)$ with its universally tight contact structure. \textbf{(b)} An open book with once-punctured torus page and monodromy consisting of a left-handed Dehn twist along a nonseparating curve and a right-handed Dehn twist along boundary-parallel curve. \textbf{(c)} An annular open book whose monodromy is a single left-handed Dehn twist about the core curve, yielding an overtwisted contact structure on $S^3$.
\end{example}

\begin{figure} \center
\def\svgwidth{\linewidth}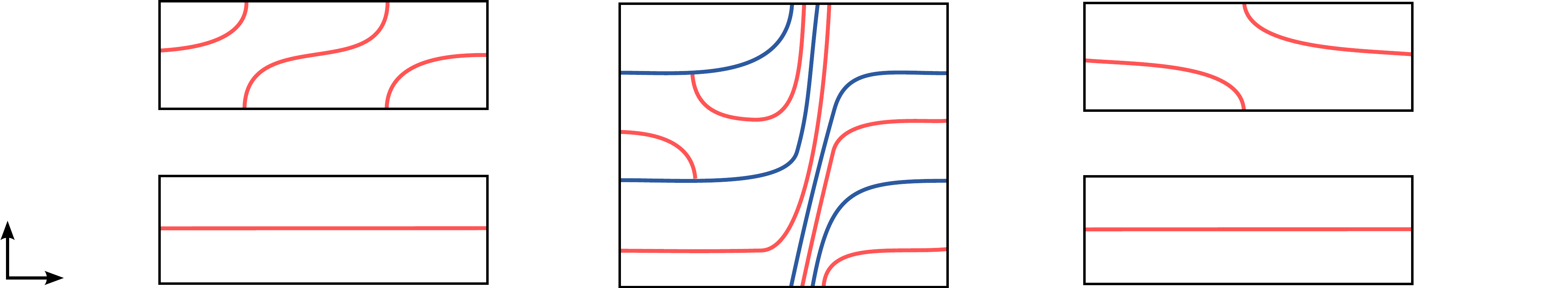 
\caption{Abstract Morse diagrams from Example~\ref{ex:morse}.}
 \label{fig:ex-diagram}
\end{figure}

\subsection{Front projections} The identification of the connected components of $Y \setminus (\Skel \cup B)$ with $W_0 \cong (0,\infty) \times S^1 \times S^1$ allows us to define a \textbf{front projection} 
$$\pr: Y \setminus (\Skel \cup B) \to \amalg^n S^1 \times S^1.$$
Here we view each torus in $\amalg^n S^1 \times S^1$ as $\{1\} \times S^1 \times S^1$, the boundary of a regular neighborhood of a binding component. 

A generic smooth knot $K$ in $Y$ will avoid the binding $B$ and intersect the 2-complex $\Skel$ transversely. We can assume that the image of the knot under the projection map to the collection of tori is an immersion with transverse double points, forming the starting point for a knot diagram. The key difference is that the image will generally consist of immersed arcs whose endpoints lie on the trivalent graph $\mgraph$ inside the Morse diagram. The endpoints of these arcs correspond to intersections of $K$ with $\Skel$. When the knot $K$ is Legendrian (i.e. when $TK$ lies inside $\xi$), the resulting knot diagram exhibits many of the same features as Legendrian fronts in $(\rr^3,\xi_\st)$. See Figure~\ref{fig:front} for examples.

\begin{definition}\label{def:front}
A \textbf{front} on a Morse diagram $(\amalg^n S^1 \times S^1,\mgraph)$ is a collection of arcs and closed curves ${D}$ immersed, with semicubical cusps, in $\amalg^n S^1 \times S^1$, satisfying the following properties:
\begin{enumerate}[label=(\roman*)]
\item The slopes at all interior points on ${D}$ are negative (using coordinates $(\theta,z)$ on $S^1 \times S^1$ and measuring slope as $dz/d\theta$).
\item The endpoints of arcs of ${D}$ lie on the interiors of curves of $\mgraph$ and have slope 0.
\item Suppose that $e$ and $e'$ are two edges of $\mgraph$ with the same label. For every arc of ${D}$ ending on $e$ at height $t$, approaching $e$ from the left (respectively, right), there is an arc of ${D}$ ending on $e'$ at height $t$, approaching $e'$ from the right (left).
\end{enumerate}
\end{definition}

\begin{figure} \center
\def\svgwidth{\linewidth}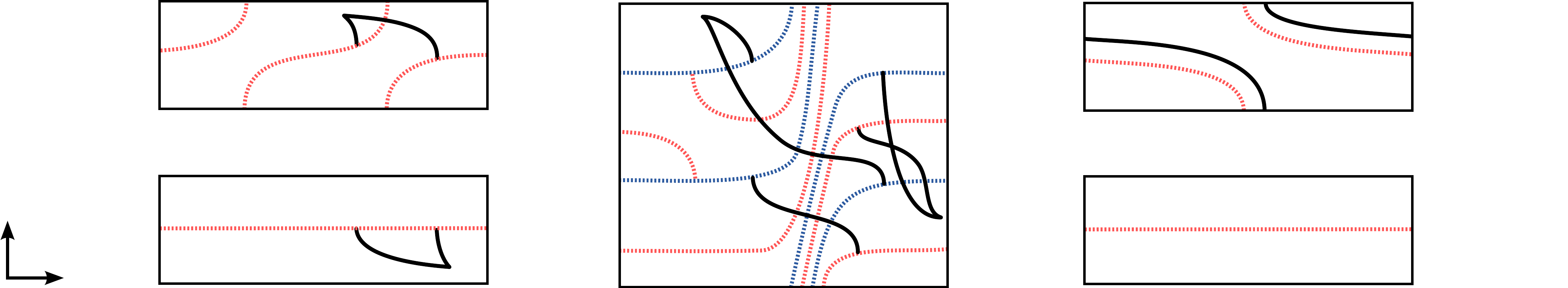 
\caption{Legendrian fronts in Morse diagrams.}
 \label{fig:front}
\end{figure}

The following theorem says that we can use front projections in Morse diagrams in much the same manner as front projections in $(\mathbb{R}^3,\xi_\st)$.

\begin{theorem}[Gay-Licata]\label{thm:front}
Let $\Lambda$ be a Legendrian link in $(Y,\xi)$ that is disjoint from the binding and transverse to $\Skel$. Then the image of $\Lambda$ under the flow by $\pm V$ to $\amalg^n S^1 \times S^1$ is a front on the Morse diagram. Furthermore, any front on this Morse diagram is the image of such a Legendrian $\Lambda$, and any two Legendrians with the same front are equal.
\end{theorem}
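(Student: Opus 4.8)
The plan is to reduce everything to the local model $(W_0,\xi_0)$ and to recognize $\pr$ as the ordinary Legendrian front projection for the contact form $\alpha_0 = dz + x\,dy$. By Theorem~\ref{thm:contacto} each component of $Y \setminus (\Skel \cup B)$ is contactomorphic to $W_0$, and I would first record that under this identification the flow of $V$ runs along $\partial_x$: it is tangent to the characteristic foliation $\ker(\alpha_0|_{F_\theta}) = \ker(dz) = \langle \partial_x\rangle$ of the pages $F_\theta = \{y=\theta\}$. Flowing by $\pm V$ therefore carries every point of a chart to the torus $\{1\}\times S^1\times S^1$ along the $\partial_x$-direction (the sign chosen according to position), so that $\pr$ is exactly the projection $(x,y,z)\mapsto(y,z)=(\theta,z)$, with the binding at the $x\to\infty$ end and $\Skel$ reached as $x\to 0$.

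For the forward direction, parametrize a component of $\Lambda$ inside one chart as $(x(t),y(t),z(t))$. The Legendrian condition $\alpha_0(\dot\Lambda)=\dot z + x\dot y = 0$ gives, wherever $\dot y\neq 0$, the slope $dz/d\theta = \dot z/\dot y = -x$; since $x\in(0,\infty)$ this is strictly negative, which is property (i) of Definition~\ref{def:front}. The projection fails to be an immersion exactly where $\dot y = 0$, and there $\dot z = -x\dot y = 0$ as well while $\dot x\neq 0$; tracking the second-order behavior shows the front acquires a semicubical cusp whose tangent line still has the finite negative slope $-x$, matching the cusps permitted in the definition. For property (ii) I would use that $\Lambda$ meets $\Skel$ transversely and that $\Skel$ is reached as $x\to 0$, so the slope $-x$ tends to $0$ and each arc limits to an endpoint of slope $0$. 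Because $\Skel$ and $\Coskel$ are unions of leaves of the $\partial_x$-foliation through the page singularities, they share the same image under $\pr$, namely the graph $\mgraph$, so these endpoints lie on $\mgraph$. Property (iii) is the matching condition across $\Skel$: an arc running into the skeleton in one chart continues, after the crossing, into the adjacent chart, and the record of which side (left or right) the arc approaches the relevant edge of $\mgraph$ is precisely the pairing data built into the abstract Morse diagram.

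For the reverse direction and uniqueness, given a front $D$, property (i) lets me define $x := -dz/d\theta > 0$ along each arc and lift it into the chart by $(\theta,z)\mapsto(x,\theta,z)$; the lifted curve is tangent to $\ker\alpha_0$ by construction, hence Legendrian, and at a cusp it extends smoothly because $x$ stays finite and nonzero while the added $x$-coordinate desingularizes the front. At an endpoint on $\mgraph$ one has $x\to 0$, so the lifted arc limits onto $\Skel$, and property (iii) guarantees that the arcs on the two sides of each such edge fit together with matching one-sided limits, gluing into a smooth curve crossing $\Skel$ transversely and yielding a closed Legendrian $\Lambda\subset Y$ disjoint from $B$. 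A transverse double point of $D$ lifts to two points of distinct slope, hence distinct $x$-coordinate, so $\Lambda$ is embedded. Uniqueness is the easiest step and is immediate: any Legendrian projecting to $D$ must have $(y,z)=(\theta,z)$ read off from the front and $x=-dz/d\theta$ forced by the Legendrian condition, so the lift is unique.

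The genuinely delicate part is the behavior at the skeleton: establishing in the forward direction that arcs limit to $\mgraph$ with slope $0$ and the correct left/right incidence, and in the reverse direction that condition (iii) really does make the local Legendrian pieces glue into an embedded curve across $\Skel$. This is where the global combinatorics of the open book enters through the Morse diagram, and it relies on the explicit local model for $V$ near the index-$1$ singularities of the $V_\theta$ furnished by Gay-Licata; the interior front calculus, by contrast, is a direct transcription of the familiar $(\rr^3,\xi_\st)$ story.
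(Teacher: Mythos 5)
The paper does not actually prove this statement---it is quoted as background from Gay--Licata \cite{g-l:morse}---so there is no in-paper argument to compare against; your sketch is, in substance, a faithful reconstruction of the original proof: identifying $\pr$ with the classical front projection in the model $\alpha_0 = dz + x\,dy$, reading off the slope $-x$ from the Legendrian condition, lifting a front via $x := -dz/d\theta$, and locating the genuine content in the behavior at $\Skel$, which you correctly defer to Gay--Licata's explicit local model near the index-$1$ singularities. The sketch is correct at this level of detail (modulo the standard genericity needed for the cusps to be semicubical), and it takes the same route as the source.
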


Gay and Licata also provide a set of diagram moves for Legendrian fronts and prove an analogue of Reidemeister's theorem. The preceding definition and theorem extend naturally to allow for front projections of \textbf{Legendrian graphs}, embedded spatial graphs whose edges are tangent to the contact planes.  The set of Reidemeister moves for Legendrian graphs in $(\rr^3,\xi_\st)$ from \cite{bi:leg-qp} also extends in the obvious way. In addition to these Reidemeister moves, we also wish to consider \textbf{subdivision} of the edges of a Legendrian graph, which consists of placing a new vertex along an existing edge.

\subsection{Legendrian ribbons} As discussed in \S\ref{sec:intro}, we define a \textbf{Legendrian ribbon} of a Legendrian graph $\Lambda$ in $(Y,\xi)$ to be a smoothly embedded surface  $R \subset Y$ with transverse boundary such that 
\begin{enumerate}
\item $\Lambda$ is in the interior of $R$ and $R$ retracts onto $\Lambda$ under a flow tangent to $\xi|_R$,
\item for each $p \in \Lambda$, the 2-plane $\xi_p$ is tangent to $R$, and
\item for each $p \in R \setminus \Lambda$, the 2-plane $\xi_p$ is transverse to $R$.
\end{enumerate}

\smallskip

\noindent In $(\rr^3,\xi_\st)$, there is a standard recipe for building a Legendrian ribbon using front diagrams. One begins by  replacing the cusps and vertices of the diagram with ribbon neighborhoods as depicted on the left half of Figure~\ref{fig:recipe}. Away from these cusps and vertices, we extend the ribbon in such a way that it undergoes one half-twist along each segment that contains no cusps or vertices. For a thorough justification of these steps, see \cite[Algorithm 2]{avdek}. Ambiently, we can understand the presence of the half-twist as follows: First, construct a band by pushing the arc off itself along the $\pm \partial/\partial y$ directions. The front projection of this band is degenerate and will have the same image as the arc itself. Thus, to ensure that the band can glue to the local ribbon neighborhoods of the cusps/vertices at its ends, we apply a quarter-twist to each end of the band. These combine to form a single half-twist as depicted.

\begin{figure} \center
\includegraphics[width=\linewidth]{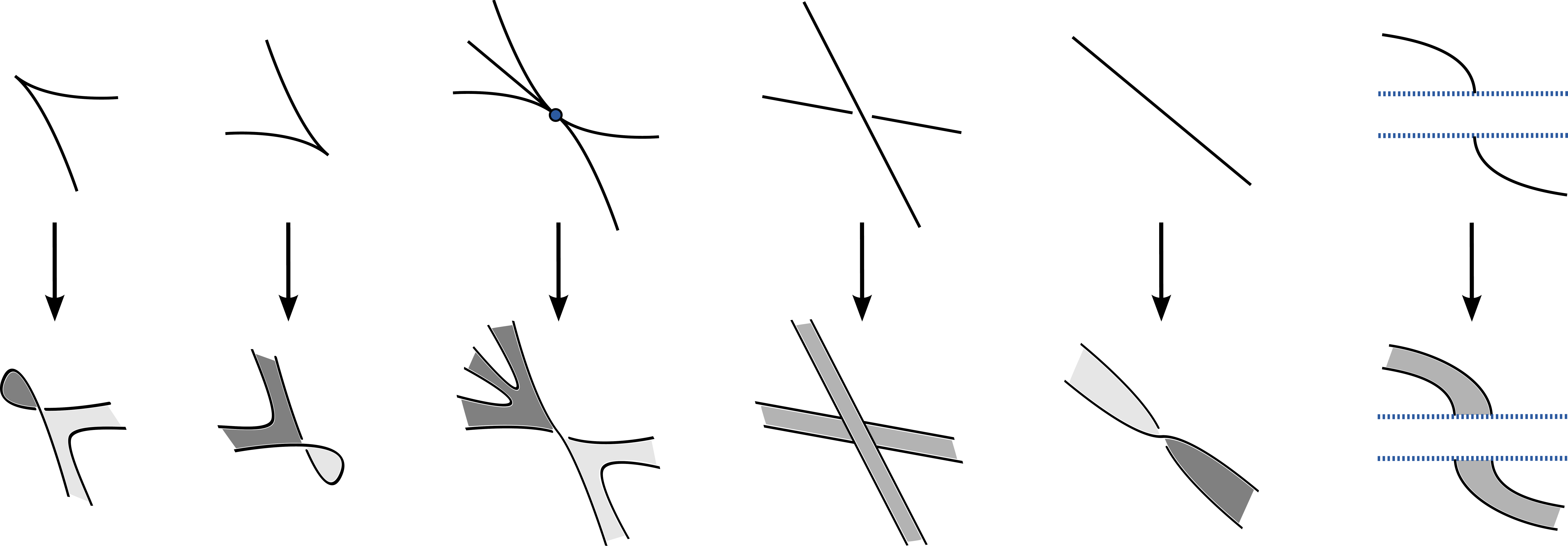} 
\caption{Building a ribbon from a Legendrian front.}
 \label{fig:recipe}
\end{figure}

When considering Legendrian fronts in a Morse diagram, the procedure for producing front projections of Legendrian ribbons is nearly identical. Suppose that $\Lambda \subset (Y,\xi)$ is a Legendrian graph with front projection $D(\Lambda)$ inside  an associated Morse diagram $(\amalg^n S^1 \times S^1,\mgraph)$. We can assume that all cusps and vertices of $D(\Lambda)$ occur away from $\mgraph$, so the local moves from Figure~\ref{fig:recipe} generalize in the obvious way. Again we extend the ribbon over each cusp-/vertex-free segment of the diagram, introducing a single half-twist along each such segment. This choice is justified just as in the standard setting; now we are pushing the arc off itself using the vector field $V$ instead of $\partial/\partial y$. For visual clarity, we can assume the twist occurs away from $\mgraph$ as shown in  Figure~\ref{fig:recipe}.

It is easy to check that subdivision of edges preserves the isotopy type of the Legendrian ribbon and the transverse isotopy type of its boundary.

\section{From Bennequin surfaces to Legendrian ribbons and back} \label{sec:ribbon-qp}

Our goal in this section is to prove the main theorem, which draws an equivalence between strongly quasipositive links in an open book and links bounding Legendrian ribbons in a compatible contact structure.

\subsection{From Bennequin surfaces to Legendrian ribbons}

We begin by showing that every strongly quasipositive transverse link in an open book bounds a Legendrian ribbon. Though this observation was included in \cite{hayden:stein}, we reprove it here for convenience. The following criterion helps us identify when a Seifert surface is isotopic to a Legendrian ribbon.

\begin{lemma}[Ribbon flexibility]\label{lem:recognize}
A transverse link is the boundary of a Legendrian ribbon if and only if it has a Seifert surface whose characteristic foliation is Morse-Smale and contains no negative singularities or attracting closed leaves.
\end{lemma}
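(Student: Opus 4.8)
The plan is to recast both conditions as statements about the characteristic foliation of the surface and to exploit the reversible relationship between the degenerate foliation of an honest Legendrian ribbon and a nearby Morse--Smale foliation obtained by a small perturbation. Fix an orientation on the surface and direct the characteristic foliation by a vector field $X$ spanning $\xi \cap T\Sigma$ away from the singular set; with the standard conventions a positively transverse boundary is transverse to the foliation, with $X$ pointing outward along it, so that every forward orbit of $X$ either escapes through the boundary or limits onto an interior sink or attracting closed leaf. Recall that the singularities of the foliation are the points where $\xi$ is tangent to the surface, that each carries a sign recording whether the orientations of $\xi$ and of the surface agree there, and that positive elliptic points are sources of $X$.

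For the \emph{only if} direction, begin with a Legendrian ribbon $R$ of a Legendrian graph $\Lambda$. By conditions~(2) and (3) the contact planes are tangent to $R$ exactly along $\Lambda$, so $\Lambda$ is the (degenerate) singular set of the characteristic foliation and $R$ retracts onto it along $X$. A $C^\infty$-small isotopy of $R$ supported near $\Lambda$ and fixing $\partial R$ --- hence preserving its transverse type --- replaces this singular graph by finitely many nondegenerate singularities and makes the foliation Morse--Smale, since the Morse--Smale foliations are generic. Two points are then checked from the local model of the ribbon in Figure~\ref{fig:recipe}: first, all the resulting singularities are positive, because $\xi$ is transverse to $R$ on $R \setminus \Lambda$ with a globally consistent coorientation, so no tangency of the opposite sign can be created; second, the perturbed foliation still retracts onto a graph, and an attracting closed leaf --- a nonempty compact set fixed by the flow and disjoint from any spine --- would obstruct such a retraction, so none can appear.

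For the \emph{if} direction, let $\Sigma$ be a Seifert surface whose characteristic foliation is Morse--Smale with no negative singularities and no attracting closed leaves. Then the foliation has only positive elliptic points (sources) and positive hyperbolic points (saddles), no sinks, and no attracting closed orbits, so $X$ exits the boundary and the maximal backward-invariant set is the graph $\Lambda$ consisting of the sources, the saddles, the stable separatrices running from sources into saddles, and any (necessarily repelling) closed leaves. Morse--Smaleness forbids saddle--saddle connections, so $\Lambda$ is an embedded graph whose edges are leaves of the foliation and hence Legendrian; and since the interior contains no attractor, the flow-box product structure on $\Sigma \setminus \Lambda$ produces a deformation retraction of $\Sigma$ onto $\Lambda$ along $X$. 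It then remains to reverse the perturbation of the previous paragraph: an isotopy supported near $\Lambda$ spreads the isolated tangencies along the entire spine, yielding a surface that is tangent to $\xi$ exactly along $\Lambda$, transverse to $\xi$ elsewhere, and still retracts onto $\Lambda$ --- that is, an honest Legendrian ribbon whose boundary is transverse-isotopic to $\partial\Sigma$.

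The main obstacle is the last step in each direction, namely passing between an embedded surface and the standard ribbon neighborhood of the Legendrian graph $\Lambda$. I expect to control this with a normal-form argument: $\Lambda$ has a standard contact neighborhood, and a surface that is tangent to $\xi$ along $\Lambda$ and retracts onto it is determined near $\Lambda$ up to isotopy, so both the Morse perturbation and its reversal can be carried out inside a fixed neighborhood of $\Lambda$ without disturbing the transverse boundary. The accompanying sign bookkeeping --- checking that a Legendrian ribbon carries only positive singularities, so that ``no negative singularities'' is exactly the correct hypothesis rather than a merely topological retractability condition --- is the other delicate point, and is where the consistent coorientation of $\xi$ along $R \setminus \Lambda$ is used.
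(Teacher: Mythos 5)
Your forward direction matches the paper's (which is equally brief), but the reverse direction has a genuine gap in your treatment of repelling closed leaves. You propose to absorb them into the spine $\Lambda$, taking $\Lambda$ to be the closure of the stable manifolds of the saddles together with the repelling closed leaves, and then to flatten the surface into a ribbon along this set. The problem is that the Morse--Smale hypothesis permits a stable separatrix of a hyperbolic point to have a repelling closed leaf as its $\alpha$-limit set, spiraling onto it in backward time. In that configuration the closure of the stable manifolds together with the closed leaves is not an embedded finite graph --- near each point of the leaf it contains infinitely many accumulating strands of the spiral --- so it cannot serve as the Legendrian graph of a ribbon. Your assertion that ``Morse--Smaleness forbids saddle--saddle connections, so $\Lambda$ is an embedded graph'' does not address this; no saddle--saddle connection is involved. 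Even in the clean case with no spiraling separatrices, converting a nondegenerate repelling closed leaf into a circle of tangency (a Legendrian divide at the core of an annular piece of the ribbon) is precisely the kind of germ statement you defer to an unproved ``normal-form argument.'' The paper disposes of both difficulties at a stroke with an idea your proposal is missing: it first \emph{eliminates} every repelling closed leaf by creating a positive elliptic--hyperbolic pair along it (Figure~\ref{fig:creation}, citing Lemma~2.3 of \cite{ef:trivial}). After this move there are no closed leaves at all, every backward orbit limits to a positive elliptic point, and the closure $\Gamma$ of the stable manifolds of the hyperbolic points is an honest embedded Legendrian graph.

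The second soft spot is your final flattening step, where you claim that a surface tangent to $\xi$ along $\Lambda$ and retracting onto it is determined near $\Lambda$ up to isotopy, so that the Morse perturbation can be reversed inside a fixed neighborhood without disturbing the boundary. This is the analytic heart of the lemma and is left as an expectation rather than proved. The paper replaces it with convex-surface machinery: since there are no negative singularities or closed leaves, the complement of a neighborhood $S_+$ of $\Gamma$ is a union of annuli foliated by arcs from $\partial S$ to $\partial S_+$, so the characteristic foliation is divided by boundary-parallel core curves; one then builds a contact vector field realizing this dividing set as in \cite[Lemma~2.1]{E-VHM:fibered}, defines a model foliation $\mathcal{F}'$ that equals the ribbon foliation of $\Gamma$ inside $S_+$ and agrees with the original foliation outside, checks that $\mathcal{F}'$ is divided by the same curves, and invokes the relative Giroux flexibility theorem \cite[Theorem~2.2]{E-VHM:fibered} to isotope $S$ rel boundary until its characteristic foliation is $\mathcal{F}'$. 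This supplies exactly the normal form you were hoping for, and the rel-boundary formulation is what guarantees the transverse isotopy type of $\partial S$ is preserved. If you wish to keep your local approach, you would need to prove elimination-in-reverse statements near the spine; citing flexibility, as the paper does, is the standard and cleaner route.
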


\begin{proof}
First, suppose that $R$ is a Legendrian ribbon in $(Y,\xi)$. 
After a small perturbation, we can assume that the characteristic foliation on $R$ is Morse-Smale, contains no closed leaves, and has a positive elliptic point at each vertex and a single positive hyperbolic point along each edge, as claimed. 

Conversely, suppose that $K$ is a transverse link with a Seifert surface $S$ whose characteristic foliation is Morse-Smale and contains no attracting closed leaves or negative singularities. We may eliminate any repelling closed leaf $\ell$ of the characteristic foliation by introducing a positive elliptic-hyperbolic pair along $\ell$ as in Figure~\ref{fig:creation}; see Lemma~2.3 of \cite{ef:trivial}.  Now let $\Gamma$ be the Legendrian graph obtained as (the closure of) the union of the hyperbolic points' stable manifolds, and let $S_+$ denote a small neighborhood of $\Gamma$ in $S$. By the Morse-Smale condition and the fact that there are no negative singularities or closed leaves, the subsurface $S \setminus S_+$ is a collection of annuli whose characteristic foliation consists of parallel arcs running from $\partial S$ to $\partial S_+$. Therefore  the characteristic foliation on $S$ is divided by a set of core curves for these annuli, one parallel to each boundary component of $S$.  As in \cite[Lemma 2.1]{E-VHM:fibered}, we can construct a contact vector field transverse to $S$ with this dividing set. It is easy to see that there is another singular foliation $\mathcal{F}'$ on $S$ that is conjugate to the characteristic foliation on a Legendrian ribbon of $\Gamma$ and agrees with the characteristic foliation on $S$ outside of $S_+$. The dividing set constructed above also divides the singular foliation $\mathcal{F}'$. An appropriate version of Giroux's flexibility theorem (see \cite[Theorem~2.2]{E-VHM:fibered}) now says that we can isotope $S$ rel boundary so that its characteristic foliation agrees with $\mathcal{F}'$. It follows that there is an isotopy carrying $S$ to a Legendrian ribbon, and this isotopy preserves the transverse isotopy type of the boundary.
\end{proof}

\begin{figure} \center
\def\svgwidth{325pt}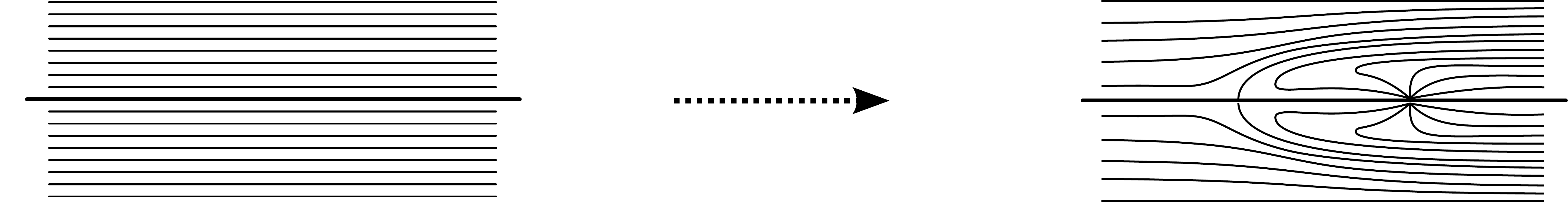 
\caption{Creation of an elliptic-hyperbolic pair in the characteristic foliation.}
 \label{fig:creation}
\end{figure}

\begin{proposition}\label{prop:qp-to-r}
A Bennequin surface with only positive bands in an open book is isotopic to a Legendrian ribbon with respect to the compatible contact structure. Moreover, the isotopy restricts to transverse isotopy along the boundary.
\end{proposition}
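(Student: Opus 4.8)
The plan is to read off the characteristic foliation that $\xi$ induces on the Bennequin surface $S$, verify that after a $C^\infty$-small perturbation it meets the hypotheses of Lemma~\ref{lem:recognize}, and then let that lemma supply the isotopy. Since being the boundary of a Legendrian ribbon is invariant under contact isotopy and all contact structures compatible with a fixed open book are isotopic, I am free to take $\xi$ to be the Gay--Licata model of \S\ref{subsec:morse}; this has the advantage that on each page $F_\theta$ the characteristic foliation is directed by the field $V_\theta$, which has a single source, no sinks, and no closed leaves.

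First I would treat the disks. Each disk of $S$ is meridional for the binding $B$, and $B$ is positively transverse to $\xi$. In the standard neighborhood model of a positive transverse curve, such a disk has characteristic foliation with a single positive elliptic point, realized as a source from which the leaves emanate outward across the attaching regions of the bands and across the transverse boundary arcs. Thus each disk contributes exactly one positive, repelling elliptic singularity.

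Next, and this is the crux, I would analyze the positive half-twisted bands. Away from its unique tangency with the page $F_\theta$, a band lies in $F_\theta$ and its characteristic foliation is a nonsingular perturbation of the page foliation directed by $V_\theta$. At the tangency the band's tangent plane agrees with $TF_\theta$, so the line field $\xi \cap TF_\theta$ becomes tangent to the band and produces an isolated singularity; the half-twist makes this a saddle rather than a center. The hypothesis that the band is \emph{positive} --- that the orientation of $S$ agrees with that of the page at the tangency --- is precisely what forces this hyperbolic point to be positive, with its stable separatrices running back into the elliptic sources of the two adjacent disks and its unstable separatrices running out to $\partial S$. Producing an explicit local model for $\xi$ near a page tangency and confirming that the sign of the band matches the sign of the hyperbolic point is where I expect the real work to lie.

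Finally I would assemble the global picture. The resulting characteristic foliation on $S$ has one positive elliptic point per disk and one positive hyperbolic point per band and no other singularities; in particular there are no negative singularities and no sinks, so any closed leaf that occurs is repelling and hence not attracting. A $C^\infty$-small perturbation making the foliation Morse--Smale introduces neither negative singularities nor attracting closed leaves, so $S$ satisfies the hypotheses of Lemma~\ref{lem:recognize}. That lemma then produces an isotopy carrying $S$ to a Legendrian ribbon whose restriction to $\partial S$ is a transverse isotopy, which is exactly the claimed statement.
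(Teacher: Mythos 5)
Your global picture---one positive elliptic point per disk, one positive hyperbolic point per band, then Lemma~\ref{lem:recognize}---is exactly the singularity structure the paper's proof relies on, but your justification of that structure has a genuine gap. The singular points of the characteristic foliation of the Bennequin surface $S$ are the points where $\xi$ is tangent to $S$, and passing to the Gay--Licata model controls only the characteristic foliations of the \emph{pages} (the fields $V_\theta$), not that of an arbitrary embedded surface such as $S$. In that model nothing prevents $\xi$ from being tangent to a disk of $S$ away from its single intersection with $B$ (the transverse-neighborhood model gives the positive elliptic point only near $S \cap B$, while the disk stretches through every page far from the binding), nor from being tangent to a band away from its half-twist tangency with $F_{\theta_j}$. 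Your statement that away from the tangency ``a band lies in $F_\theta$ and its characteristic foliation is a nonsingular perturbation of the page foliation'' is also geometrically off: the band is not contained in the page (it twists out of it and runs into the disks, which are transverse to all pages), and $V_\theta$ directs the characteristic foliation of the page, a different surface. So the claim that the foliation on $S$ has ``no other singularities''---which is precisely what Lemma~\ref{lem:recognize} needs---does not follow from your reduction. A secondary slip: the inference ``no sinks, so any closed leaf is repelling'' is false in general, since an attracting closed leaf is not a singular point (a disk with a source, an attracting limit cycle, and a repelling limit cycle has no sinks); it is harmless only once the explicit singularity list is actually established.

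The missing ingredient is an isotopy of $\xi$ \emph{adapted to the surface} $S$, not merely a convenient global model: one must press the contact planes $C^0$-close to the tangent planes of the pages along $S$, after which tangencies of $\xi$ with $S$ localize at tangencies of $S$ with pages and at $S \cap B$, with signs matching the signs of the bands and binding intersections. This is exactly what the paper invokes in packaged form: the open book foliation of a positive Bennequin surface consists of positive $aa$-tiles, and Theorem~2.21 of \cite{ito-kawamuro:open} states that $\xi$ can be isotoped so that the characteristic foliation of $S$ is conjugate to its open book foliation; Lemma~\ref{lem:recognize} then finishes, just as in your final paragraph. Note that your own opening observation---that bounding a Legendrian ribbon is invariant under isotopy of $\xi$---already licenses this move; you simply chose the wrong representative of the isotopy class, one adapted to the pages rather than to $S$. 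With the appeal to the Gay--Licata model replaced by the Ito--Kawamuro conjugation (or by an explicit Giroux-type isotopy carried out uniformly along all of $S$, including the disks), your argument becomes the paper's proof.
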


\begin{proof}
The open book foliation on the Bennequin surface consists of positive $aa$-tiles. By Theorem~2.21 of \cite{ito-kawamuro:open}, we can isotope $\xi$ so that the characteristic foliation and open book foliation are conjugate. The claim now follows from the lemma above. \end{proof}

\subsection{From Legendrian ribbons to Bennequin surfaces} To complete the proof of Theorem~\ref{thm:sqp-ribbon}, we establish the following converse to Proposition~\ref{prop:qp-to-r}:

\begin{proposition}\label{prop:r-to-qp}
A Legendrian ribbon in $(Y,\xi)$ can be isotoped to a Bennequin surface with only positive bands with respect to any compatible open book. Moreover, the isotopy restricts to transverse isotopy along the boundary.
\end{proposition}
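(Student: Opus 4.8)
The plan is to run the construction from Proposition~\ref{prop:qp-to-r} in reverse, using the front-projection technology of Gay and Licata to convert the combinatorial data of a Legendrian ribbon into a Bennequin surface. Given a Legendrian ribbon $R$, I would first isotope $R$ so that its core Legendrian graph $\Lambda$ is disjoint from the binding $B$ and transverse to the skeleton $\Skel$; by Theorem~\ref{thm:front}, the image of $\Lambda$ under the flow by $\pm V$ is then a front $D(\Lambda)$ on the Morse diagram. Applying the ribbon recipe described in \S\ref{sec:prelim} (the Morse-diagram analogue of Figure~\ref{fig:recipe}) reconstructs $R$ from this front data up to isotopy, so it suffices to build a positive Bennequin surface from $D(\Lambda)$ whose boundary is transversely isotopic to $\partial R$.

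The key geometric input is the structure of the vector field $V$ from Theorem~\ref{thm:contacto}: on each page $F_\theta$ the restriction $V_\theta$ is tangent to the characteristic foliation and has a single source and no sinks. I would use this to produce the disk-and-band decomposition. The elliptic vertices of $\Lambda$ (the positive elliptic points of the characteristic foliation on $R$) should be engulfed by meridional disks of the binding, realized as the half-twisted-band construction's disks; the edges of $\Lambda$, each carrying a single positive hyperbolic point, should become the half-twisted bands. The essential point is that each edge of the front, being monotonic in the $S^1$-direction with negative slope, can be straightened via the flow so that the corresponding band is attached along an embedded arc $\alpha$ lying in a single page $F_\theta$, with exactly one tangency to the page. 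I would verify that the sign of this tangency is forced to be \emph{positive}: because $R$ is a Legendrian ribbon, its characteristic foliation has only positive singularities, so the orientation of $R$ agrees with the orientation of the page at each band's tangency, yielding only positive bands in the sense of Definition~\ref{def:sqp}. This is where the ribbon hypothesis (no negative singularities, via condition~(2) of the ribbon definition) does the real work.

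The main obstacle I anticipate is the global coherence of the band-and-disk structure: individually arranging each band to lie in a single page is local, but I must ensure the meridional disks and the arcs $\alpha_j$ assemble into an embedded Bennequin surface whose open book foliation consists only of positive $aa$-tiles, matching the characterization preceding Definition~\ref{def:sqp}. Controlling the interaction between bands attached at nearby heights—precisely the situations governed by the pairing conditions (ii) and (iii) in the front and abstract-Morse-diagram definitions—will require care, and the trivalent vertices of $\mgraph$ (where arcs pass through $\Skel$) are the delicate points. I would handle these by subdividing edges of $\Lambda$ as needed, using the fact (noted at the end of \S\ref{sec:prelim}) that subdivision preserves the isotopy type of the ribbon and the transverse type of its boundary. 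Finally, to obtain the boundary conclusion, I would track the boundary $\partial R$ throughout: since the entire passage from $R$ to the Bennequin surface proceeds through ambient isotopies and the reconstruction of $R$ from its front, and since the front determines $\Lambda$ hence $\partial R$ up to transverse isotopy, the resulting isotopy restricts to a transverse isotopy on the boundary, as required.
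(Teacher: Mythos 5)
Your outline tracks the paper's strategy (front projections on the Morse diagram, vertices becoming meridional disks, edges becoming bands, subdivision, transverse isotopy of the boundary), but the pivotal step is asserted rather than proved, and the mechanism you cite cannot carry it. You claim each edge of the front, ``being monotonic in the $S^1$-direction with negative slope, can be straightened via the flow so that the corresponding band is attached along an embedded arc $\alpha$ lying in a single page.'' The flow of $V$ is tangent to the pages---its restriction to each $F_\theta$ is $V_\theta$---so it preserves the $\theta$-coordinate and cannot shrink the $\theta$-support of an edge; monotonicity is no help, since a front arc of negative slope necessarily sweeps through an interval of pages (and a Legendrian arc lying literally in one page would have to be a segment of a leaf of the characteristic foliation of that page, a very constrained position). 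What is actually needed is the content of Proposition~\ref{prop:arc-pos}: after subdividing edges, the Legendrian graph can be isotoped into \emph{cusped arc position}, proved by approximating the front by a slanted rectangular graph with slopes $-\epsilon$ and $-1/\epsilon$, smoothing, and contracting toward the binding, so that each (subdivided) edge acquires vanishingly small $\theta$-support with its endpoints near the binding. You invoke subdivision only to manage trivalent points of $\mgraph$, not as the device that makes each band page-like, so the central lemma of the argument is missing.

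There are two further gaps. Your positivity argument conflates two different foliations: condition~(2) of the ribbon definition controls singularities of the \emph{characteristic} foliation (tangencies of $R$ with $\xi$), whereas the sign of a band in Definition~\ref{def:sqp} is governed by tangencies of $R$ with the \emph{pages} (singularities of the open book foliation), and there is no a priori identification of the two signs; the Ito--Kawamuro conjugation that relates them is the engine of the forward direction (Proposition~\ref{prop:qp-to-r}) and is not available here for free. In the paper, positivity is instead built in by hand: the ribbon recipe inserts a specific half-twist along each vertex-free segment (the push-off by $V$, as in Figure~\ref{fig:recipe}), and in cusped arc position this half-twist is visibly positive near each vertex disk (Figure~\ref{fig:vertex}). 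Finally, ``tracking the boundary'' does not by itself produce a braid: after forming the ribbon in arc position, the arcs of $\partial R$ running behind each vertex disk are negatively braided, and one must braid them by an explicit transverse isotopy pushing them across the binding, as in Example~\ref{ex:ribbon}. These gaps are fixable, but as written the hardest steps of the proof are exactly the ones left unargued.
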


We will show that any Legendrian graph in an open book can be arranged in a position analogous to the familiar ``arc presentations'' of knots and links in $\mathbb{R}^3$; see the survey \cite{cromwell:survey}. When the graph is arranged in this manner, its Legendrian ribbon is naturally isotopic to a Bennequin surface. We first illustrate the strategy with an example.

\begin{example}\label{ex:ribbon} The set of diagrams in Figure~\ref{fig:ribbon-ex} above depicts the construction of a special Legendrian ribbon for the Legendrian knot from the middle of Figure~\ref{fig:front}. In the first step, vertices are added at the cusps and at some intermediate points. As a result, the middle diagram consists of segments that are almost horizontal near vertices and nearly vertical elsewhere. The resulting front projection for the Legendrian ribbon consists of disks joined by positively twisted bands with braided boundary. The back of each disk consists of a negatively braided segment that can be made positively braided by a transverse isotopy that pushes the segment through the binding. 

\begin{figure} \center
\def\svgwidth{\linewidth}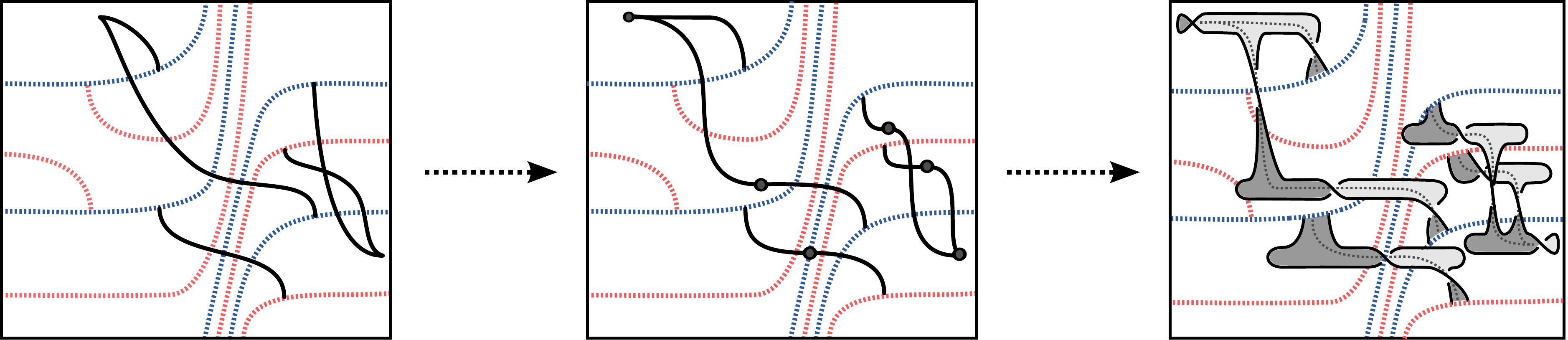 
\caption{Arranging a Legendrian graph so that its ribbon is nearly a Bennequin surface.}
 \label{fig:ribbon-ex}
\end{figure}
\end{example}

We define an \textbf{arc presentation} of a graph $G$ in an open book $(B,\pi)$ to be an embedding of $G$ in a finite collection of pages such that all vertices lie on the binding and every edge is a single simple arc in a page $F_\theta=\pi^{-1}(\theta)$. Given a Morse structure on $(B,\pi)$, we can assume that $G$ is transverse to $\Skel$. Consider the front projection of $G$. Each edge projects to a union of disjoint arcs $\wire = \cup_i \, \wire_i$ in $(\amalg^n S^1 \times S^1,\mgraph)$ such that all points in $\wire$ have the same $\theta$-coordinate and all but two points in $\partial \wire$ lie on $\mgraph$. The points in $\partial \wire$ that lie on $\mgraph$ can be paired as in condition (iii) of Definition~\ref{def:front}. We call such a union of arcs $\wire$ a \textbf{wire} in the Morse diagram, and we refer to the two points in $\partial \wire \setminus \mgraph$ as the \textbf{ends} of the wire. 
We refer to the front projection of $G$, i.e. the collection of wires, as an \textbf{arc diagram} for $G$.

Though an arc diagram does not satisfy conditions (i) and (ii) of Definition~\ref{def:front}, we can modify it so that it does correspond to the front projection of a Legendrian graph: For sufficiently small $\epsilon>0$, we can use a small isotopy of $\amalg^n S^1 \times S^1$ that preserves each $S^1 \times \{z\}$ to approximate any wire $\wire$ by a union $\wire'$ of arcs such that the slope at any point lies in $[-\infty,0]$ and is
\begin{enumerate}
\item infinite at the points of $\partial \wire \cap \mathcal{T}$,
\item zero at the ends of the wire, and
\item equal to $-1/\epsilon$ away from $\partial \wire$.
\end{enumerate}
We call this a \textbf{cusped arc diagram}, and we continue to refer to the unions of arcs as ``wires''. By (the proof) of Theorem~\ref{thm:front}, each wire  in the cusped arc diagram lifts to a Legendrian arc in $(Y,\xi)$ with its endpoints on the binding $B$. Since every arc diagram determines a cusped arc diagram that is unique up to isotopy through cusped arc diagrams, applying Theorem~\ref{thm:front} yields the following:

\begin{proposition}
Every arc diagram in an abstract Morse diagram for an open book decomposition $(B,\pi)$ for $(Y,\xi)$ determines a Legendrian graph, unique up to Legendrian isotopy of the edges. \hfill $\square$
\end{proposition}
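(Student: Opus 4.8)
The plan is to obtain the Legendrian graph by lifting the associated cusped arc diagram through the Gay--Licata correspondence (Theorem~\ref{thm:front}), so that both existence and uniqueness reduce to properties of that lift. For existence, I would begin with an arc diagram $D(G)$ and verify carefully that the $\epsilon$-modification described above outputs a bona fide front. The three prescribed slopes place every interior slope in $[-\infty,0)$, so condition~(i) of Definition~\ref{def:front} holds, while the transitions between these slopes can be arranged to be semicubical cusps. The point requiring attention is condition~(iii): since the boundary points of each wire on $\mgraph$ are already paired by condition~(iii) of the arc diagram, and the approximating isotopy preserves each slice $S^1\times\{z\}$, the left/right approach data of the arc-ends is untouched, so the pairing descends to the cusped arc diagram. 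Invoking the extension of Theorem~\ref{thm:front} to Legendrian graphs, each wire then lifts to a Legendrian arc ending on the binding, and these arcs assemble along $B$ into a Legendrian graph realizing the combinatorial type of $G$.

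For uniqueness, the content is that neither the choice of $\epsilon$ nor the choice of approximating isotopy affects the Legendrian isotopy class. I would argue that any two cusped arc diagrams built from the same arc diagram are joined by an isotopy through cusped arc diagrams: interpolating the approximating isotopies while shrinking $\epsilon$ monotonically produces a one-parameter family of fronts. Applying Theorem~\ref{thm:front} to this family---using that its lift is canonical and varies continuously, so that a path of fronts lifts to a path of Legendrians---together with the uniqueness clause that two Legendrians sharing a front coincide, shows the family lifts to a Legendrian isotopy fixing the vertices on the binding. Hence the graph is determined up to Legendrian isotopy of its edges, as claimed.

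The step I expect to be most delicate is the verification inside the existence argument that the cusped arc diagram genuinely meets the hypotheses of Theorem~\ref{thm:front}, given that its arcs terminate on $\mgraph$ with infinite slope (vertical tangency) and attain slope $0$ only at the ends---the reverse of the endpoint behavior literally demanded by condition~(ii) of Definition~\ref{def:front}. Reconciling this (here the difference between the present conventions and those of \cite{g-l:morse}, flagged in Example~\ref{ex:morse}, is relevant) and confirming that the appropriate form of the Gay--Licata lift reconnects the lifted arcs correctly across $\Skel$ while placing the wire-ends on the binding is the crux of the matter. Once that endpoint analysis is pinned down, the uniqueness argument is a routine parametrized repetition of the same lifting construction.
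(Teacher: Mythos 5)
Your proposal follows the paper's own argument essentially verbatim: replace the arc diagram by a cusped arc diagram, lift each wire through the Gay--Licata correspondence, and deduce uniqueness from the fact that the cusped arc diagram is unique up to isotopy through cusped arc diagrams, which lifts to Legendrian isotopy of the edges by Theorem~\ref{thm:front}. You also correctly isolate the one point the paper treats only implicitly---the cusped wires meet $\mgraph$ vertically and are horizontal at their ends, the reverse of condition~(ii) of Definition~\ref{def:front}, which is precisely why the paper appeals to ``(the proof of)'' Theorem~\ref{thm:front} rather than its statement.
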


If the front projection of a Legendrian graph is a cusped arc diagram, then we say that the Legendrian graph is in \textbf{cusped arc position} with respect to the open book and Morse structure. 

\begin{proposition}\label{prop:arc-pos}
After subdivision of edges, every Legendrian graph in $(B,\pi)$ can be isotoped to lie in cusped arc position.
\end{proposition}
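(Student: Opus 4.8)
The plan is to carry out, within the Gay--Licata front calculus, the Legendrian analogue of the standard procedure for putting a link into arc presentation (cf.\ \cite{cromwell:survey}). Working through Legendrian isotopies and subdivisions has the virtue that any Legendrian ribbon of $\Lambda$ is carried along, which is what is ultimately needed for Proposition~\ref{prop:r-to-qp}. By Theorem~\ref{thm:front} I may begin by putting $\Lambda$ in general position, disjoint from the binding and transverse to $\Skel$, so that it has a genuine front $D(\Lambda)$ on the Morse diagram $(\amalg^n S^1 \times S^1, \mgraph)$, built from arcs of negative slope with semicubical cusps and endpoints on $\mgraph$. The goal is then to deform $D(\Lambda)$ into a cusped arc diagram, namely a union of wires $\wire = \cup_i \wire_i$ whose slope is $0$ at the ends, $-\infty$ at the $\mgraph$-endpoints, and $-1/\epsilon$ in between.

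First I would subdivide every edge by inserting a vertex at each cusp of $D(\Lambda)$, together with vertices at finitely many intermediate points. A cusp is exactly a point where the $\theta$-coordinate along the front reverses, so after subdividing at cusps every edge is monotonic in $\theta$ and, since fronts have negative slope, is an embedded arc $z = f(\theta)$ with $f' < 0$. The intermediate vertices then cut these monotone edges into short segments (the edges of the subdivided graph), each of which will become a single wire; the cut points are the vertices that will be driven onto the binding.

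Next I would straighten each segment into a wire by a Legendrian isotopy that makes its slope more negative -- pushing the interior toward larger $x$ while flattening each end toward $x \to 0$, where the slope tends to $0$ and the segment limits onto the binding. Here, writing $\alpha_0 = dz + x\,dy$ and identifying $\theta$ with $y$, the radial coordinate is recovered from the slope as $x = -\,dz/d\theta$ on each component of $Y \setminus (\Skel \cup B) \cong W_0$; thus small $x$ corresponds to the binding and large $x$ to the skeleton, and steepening a segment pushes it toward $\Skel$. Wherever the segment then meets $\Skel$ a slope-$(-\infty)$ break appears at $\mgraph$, dividing the wire into its constituent arcs $\wire_i$. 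Arranged this way, each segment is nearly horizontal near its two binding ends and as steep as $-1/\epsilon$ in between, precisely the profile of a wire. Because the diagram remains at every stage the front projection of an honest Legendrian graph, the pairing of $\mgraph$-endpoints demanded by condition (iii) of Definition~\ref{def:front} holds automatically, so the result is a genuine cusped arc diagram and $\Lambda$ is placed, after subdivision, in cusped arc position.

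The hard part will be the straightening step, which is where this differs from the classical setting in $\rr^3$. Two things must be verified. First, that each monotone segment can in fact be pushed into a neighborhood of a single page by a Legendrian isotopy: here one uses that $x$ is unbounded above, so there is no barrier to steepening, and that the binding is the common boundary of every page, so that both endpoints can be driven to slope $0$ in a single common page. Second, and more delicately, one must control the crossings created along the way -- the transverse double points between distinct segments, to be resolved via the Legendrian Reidemeister moves of Gay and Licata, and the breaks created as a segment is pushed through $\Skel$, which must be organized (subdividing further if necessary) into the wire structure of a valid cusped arc diagram respecting the pairing of Definition~\ref{def:front}(iii). Granting this, the preceding construction identifies the front with a cusped arc diagram, and Theorem~\ref{thm:front} completes the proof.
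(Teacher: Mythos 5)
Your overall strategy---subdivide edges so that each segment is $\theta$-monotone, then make each segment nearly vertical with its ends flattened onto the binding---is the same one the paper follows, and your slope dictionary (slope $0$ at the binding, steep slope deep in a chart, breaks at $\mgraph$) is consistent with how the cusped arc diagrams are used. But your write-up defers exactly the step that carries all the content: you flag the straightening step as ``the hard part,'' list the two things that must be verified, and then proceed by ``granting this.'' That is a genuine gap, and it is precisely where the paper's proof does something different and decisive. Instead of dynamically pushing segments around---which, as you correctly worry, threatens to create crossings and, worse, new intersections with $\Skel$ that must then be reorganized into valid wires---the paper makes a \emph{static} approximation: it chooses a ``slanted rectangular graph'' $G$, an embedded graph in the tori all of whose edges have slope $-\epsilon$ or $-1/\epsilon$, that approximates the front $D(\Lambda)$ while matching its cusps and, crucially, its intersections with $\mgraph$ exactly. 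Because $G$ is close to $D(\Lambda)$ and the intersection pattern with $\mgraph$ is unchanged, no new $\Skel$-crossings and no Reidemeister bookkeeping ever arise: after smoothing the non-cusp valence-two vertices and perturbing the edges meeting $\mgraph$ to infinite slope, one has a genuine front whose lift $\Lambda_G$ is, up to subdivision, isotopic to $\Lambda$. The slope-$(-\epsilon)$ edges lift to Legendrian arcs already hugging the binding, so subdividing them and contracting onto $B$ produces the cusped arc diagram directly. In other words, your ``push each segment through $\Skel$'' step is not only unproven but avoidable: the breakpoints of the wires on $\mgraph$ should be the \emph{original} skeleton crossings of the front, never new ones created during an isotopy, and once you insist on this the delicate crossing control you postponed simply does not arise.

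A second concrete omission: cusped arc position requires \emph{all} vertices of the Legendrian graph to lie on the binding, but you only handle the vertices you create at cusps and subdivision points, implicitly treating $\Lambda$ as a link. The paper's first move is to isotope $\Lambda$ so that its original vertices---which may have arbitrary valence---lie on $B$, after which they disappear from the front entirely and are recorded only by families of arcs whose slope tends to zero. Without this step, your final diagram would retain genuine graph vertices in the interior of a chart, which is not an arc presentation; and since Proposition~\ref{prop:r-to-qp} builds the disks of the Bennequin surface precisely at the vertices on the binding, this is not a cosmetic point.
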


\begin{proof} By an isotopy through Legendrian graphs, we may assume that the vertices of our graph $\Lambda$ lie on the binding $B$ and that the front projection $D(\Lambda)$ is a generic front satisfying (i)-(iii) of Definition~\ref{def:front}, with the exception that there is a collection of open or half-open arcs in $D(\Lambda)$ whose slope approaches zero; these correspond to where the edges approach the vertices on the binding. Note that the vertices of $\Lambda$ are no longer visible in the front projection.

Define a \textbf{slanted rectangular graph} in $S^1 \times S^1$ to be an embedded graph whose edges all have slope $-\epsilon$ or $-1/\epsilon$ for a small value $\epsilon>0$. Note that a vertex in such a graph has valence at most four. We say that a valence-two vertex is a \textbf{cusp} if the incident edges form an acute angle. Choose a slanted rectangular approximation $G$ to the Legendrian front $D(\Lambda)$ such that 
\begin{enumerate}
\item the cusps of $G$ and $D(\Lambda)$ agree,
\item the intersections $G \cap \mgraph$ and $D(\Lambda) \cap \mgraph$ agree, and
\item edges of $G$ meet $\mgraph$ with slope $-1/\epsilon$.
\end{enumerate}
By smoothing the non-cusp vertices of valence two in $G$ and perturbing $G$ near valence-one vertices on $\mgraph$ so that the incoming edge has infinite slope, we obtain a front for a Legendrian graph $\Lambda_G$. Up to subdivision, the Legendrian graphs $\Lambda$ and $\Lambda_G$ are isotopic. By subdividing each edge of slope $-\epsilon$ in $G$ and contracting towards the binding, we obtain a cusped arc diagram whose lift is equivalent to $\Lambda$.
\end{proof}

Note that, as a special case, we obtain a generalization of the classical fact that every link in $S^3$ admits an arc presentation with respect to the standard open book.

\begin{corollary}\label{cor:arc-pres}
Every link in an open book admits an arc presentation. \hfill $\square$
\end{corollary}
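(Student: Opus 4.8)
The plan is to read this off from Proposition~\ref{prop:arc-pos}, which has already done the substantive work; the only additional ingredient is the passage from an arbitrary smooth link to a Legendrian one. First I would fix a contact structure $\xi$ compatible with $(B,\pi)$ together with a Morse structure, as furnished by Theorem~\ref{thm:contacto}. Given a link $L \subset Y$, I would invoke Legendrian approximation to find a Legendrian link $\Lambda$ that is smoothly isotopic to $L$ and may be taken disjoint from the binding and transverse to $\Skel$. Because an arc presentation is a purely topological object, it is enough to produce one for $\Lambda$.

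Next I would view $\Lambda$ as a Legendrian graph---placing a single vertex on each component if one wishes the graph to have genuine vertices---and apply Proposition~\ref{prop:arc-pos}: after subdivision of edges, $\Lambda$ is isotopic through Legendrian graphs to one in cusped arc position. By construction its front projection is then a cusped arc diagram, a disjoint union of wires, each of which is the $\epsilon$-small perturbation of a genuine wire $\wire$ whose points all share a single $\theta$-coordinate, with its two ends on the binding and its interior endpoints paired on $\mgraph$ as in Definition~\ref{def:front}(iii).

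It remains to convert cusped arc position into an honest arc presentation. Each wire of the cusped arc diagram limits, as the perturbation is undone, to the constant-$\theta$ collection of arcs it approximates; equivalently, I would isotope the Legendrian arcs back to the arcs of the underlying arc diagram. This carries the edge corresponding to a given wire into a single page $F_\theta = \pi^{-1}(\theta)$ as a simple arc, while the vertices, which lift the ends of the wires, come to rest on the binding $B$. Reading the resulting configuration as an embedding of the subdivided $\Lambda$ gives precisely the data of an arc presentation, and since all intervening moves are ambient isotopies of $Y$, we obtain an arc presentation of $L$. The main point to verify is this final step: that flattening the $\epsilon$-slant and the cusps produces an \emph{embedded} configuration. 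This causes no trouble, since after a small perturbation of the individual $\theta$-coordinates we may assume distinct edges occupy distinct pages; any crossings visible in the diagram are then resolved by the differing $\theta$-values, and the edges can meet only along the binding. Thus no genuinely new difficulty arises beyond what Proposition~\ref{prop:arc-pos} has already settled, and the corollary is essentially that proposition specialized to a link.
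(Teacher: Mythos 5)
Your proposal is correct and follows exactly the route the paper intends: the corollary is stated with no separate proof precisely because it is the specialization of Proposition~\ref{prop:arc-pos} to links, via Legendrian approximation followed by flattening the cusped arc diagram back to the underlying constant-$\theta$ wires. Your added care about embeddedness (perturbing so distinct edges occupy distinct pages) is a reasonable point to make explicit, but it introduces nothing beyond what the proposition already settles.
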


The claimed connection between Legendrian ribbons and positive Bennequin surfaces follows easily from this setup.

\begin{proof}[Proof of Proposition~\ref{prop:r-to-qp}]
Let $R$ be a Legendrian ribbon of the Legendrian graph $\Lambda$. By Proposition~\ref{prop:arc-pos}, we can place $\Lambda$ in cusped arc position after subdividing edges. Subdivision of edges and isotopy of Legendrian graphs not only preserve the smooth isotopy type of $R$ but also the transverse isotopy type of  its boundary.

\begin{figure} \center
\includegraphics[width=\linewidth]{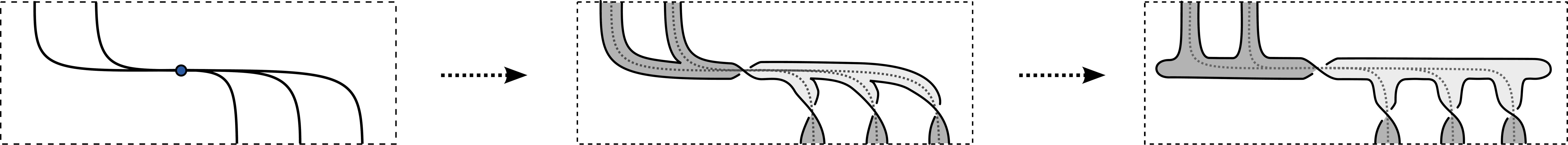} 
\caption{Forming the Legendrian ribbon near a vertex, followed by transverse isotopy of the boundary.}
 \label{fig:vertex}
\end{figure}

Now perturb $\Lambda$ slightly so that it once again misses the binding. Away from the binding, the edges of $\Lambda$ still have vanishingly small $\theta$-support. In the front projection, every vertex $v$ has a neighborhood as depicted in Figure~\ref{fig:vertex}; of course, the number and configuration of edges at $v$ will vary. As in that figure, we form the front projection of the Legendrian ribbon $R$, placing a single half-twist along each downward edge as shown. Away from these neighborhoods, the edges and ribbon are nearly vertical. By a slight isotopy of the surface induced by a transverse isotopy of the boundary, we can arrange so that almost all of $\partial R$ is (positively) braided. This is indicated in the second step of Figure~\ref{fig:vertex}. The only non-braided portions of $\partial R$ are the arcs that wind around the back of the disk at each vertex. We braid each such arc by using a transverse isotopy to push it back across the binding. The ribbon $R$ is now a Bennequin surface with positively twisted bands, as desired.
\end{proof}

\section{Applications}\label{sec:applications}

\subsection{Genus bounds} \label{subsec:genus}
 The majority of our applications will invoke some constraint on the topology of a surface bounded by a strongly quasipositive knot, so we begin by collecting these constraints. Most fundamentally, we have the Bennequin-Eliashberg inequality \cite{bennequin,yasha:20yrs}: If $L$ is a nullhomologous transverse link in a tight contact 3-manifold with Seifert surface $\Sigma$, then the self-linking number of $L$ satisfies
\begin{equation}\label{eq:be}
\sl(L,[\Sigma])\leq - \chi(\Sigma).
\end{equation}

As mentioned in \S\ref{sec:intro}, it has been conjectured that strongly quasipositive braids are the only braids for which the Bennequin-Eliashberg bound is sharp; see \cite{ik:bennequin}. 

\begin{conjecture}\label{conj:bennequin}
A transverse link $K$ in a tight contact 3-manifold $(Y,\xi)$ achieves sharpness in the Bennequin-Eliashberg bound if and only if $K$ is strongly quasipositive.
\end{conjecture}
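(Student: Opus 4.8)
The plan is to separate the two implications sharply, because only the forward one is actually within reach of the machinery developed above; the reverse implication is exactly what makes this a \emph{conjecture} rather than a theorem, and I would expect it to be the essential obstacle.

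For the forward implication — that a strongly quasipositive link achieves equality in \eqref{eq:be} — the strategy is a direct Euler-characteristic computation on a positive Bennequin surface. If $K$ is strongly quasipositive, then by Definition~\ref{def:sqp} it bounds a Bennequin surface $\Sigma$ assembled from $d$ meridional disks and $b$ positively half-twisted bands, so that $\chi(\Sigma)=d-b$. First I would read off the self-linking number directly from this decomposition: each meridional disk contributes $-1$ and each positive band contributes $+1$ to the writhe-type count computing $\sl$, giving $\sl(K,[\Sigma])=b-d=-\chi(\Sigma)$. Equivalently, one can run the computation through the Legendrian ribbon supplied by Proposition~\ref{prop:qp-to-r}, whose characteristic foliation has one positive elliptic point per vertex, one positive hyperbolic point per edge, and no negative singularities; the Bennequin-type singularity count then yields $\sl=-(e_+-h_+)=-(d-b)=-\chi$. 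This settles the ``only if'' direction in any tight contact manifold and recovers the observation recorded in \cite{ik:bennequin,hayden:stein}.

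For the reverse implication — that sharpness forces strong quasipositivity — the natural line of attack is to manufacture a Legendrian ribbon and then invoke Theorem~\ref{thm:sqp-ribbon} together with the recognition criterion of Lemma~\ref{lem:recognize}. Concretely, I would take a Seifert surface $\Sigma$ realizing equality in \eqref{eq:be}, perturb it so its characteristic foliation is Morse-Smale, and try to use sharpness to force the foliation to have no negative singularities and no attracting closed leaves; Lemma~\ref{lem:recognize} and Theorem~\ref{thm:sqp-ribbon} would then deliver strong quasipositivity. The hard part — and the reason this is only conjectural — is the passage from the numerical equality to this geometric structure. Sharpness controls only the \emph{signed} singularity count: writing $e_\pm,h_\pm$ for the numbers of positive/negative elliptic and hyperbolic points, one has $\sl+\chi(\Sigma)=2(e_- - h_-)$, so equality merely forces $e_-=h_-$. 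It does not supply the connecting trajectories needed to cancel the remaining negative elliptic--hyperbolic pairs via Giroux's elimination lemma, nor does it rule out attracting closed leaves, so there is no evident route from matched counts to a genuine positive Bennequin surface.

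It is worth noting where this strategy \emph{does} succeed: in the fibered setting, Corollary~\ref{cor:fibered} already establishes that for a fibered link in a tight manifold with zero Giroux torsion, equality in \eqref{eq:be} is equivalent to strong quasipositivity. Thus the conjecture amounts to removing the fiberedness hypothesis — precisely the place where the flow-theoretic cancellation of negative singularities can no longer be controlled. Since even the classical case in $(S^3,\xi_\st)$ remains open, I would not expect to close this gap here; the honest summary is that the forward direction is a routine consequence of the preceding results, while the reverse direction is the genuine difficulty that keeps the statement conjectural.
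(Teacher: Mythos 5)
Your treatment matches the paper's: since the statement is a conjecture, the paper likewise records only the easy implication---that strong quasipositivity forces equality in \eqref{eq:be}, as a direct consequence of the definition of the self-linking number (equivalently, the positive-singularity count on the ribbon supplied by Theorem~\ref{thm:sqp-ribbon} and Proposition~\ref{prop:qp-to-r})---and leaves the converse open. Your framing of the hard direction, including the reduction via Theorem~\ref{thm:sqp-ribbon} to producing a Legendrian ribbon in the given relative homology class and the observation that sharpness only forces the matched count $e_-=h_-$ without supplying the connecting trajectories needed for elimination, is exactly the paper's own assessment.
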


In light of Theorem~\ref{thm:sqp-ribbon}, this conjecture is equivalent to showing that $L$ achieves equality in \eqref{eq:be} with respect to a given relative homology class if and only if that relative homology class contains a Legendrian ribbon. The reverse implication is a simple consequence of the definition of the transverse self-linking number, so (as observed in \cite{hayden:stein,ik:bennequin}) the Bennequin-Eliashberg inequality is indeed sharp for a strongly quasipositive transverse link. 

If $(Y,\xi)$ has a symplectic filling $(X,\omega)$, then Legendrian ribbons can also be used to understand the minimal genus of properly embedded ``slice'' surfaces in $X$ with boundary $L \subset Y$. To this end, recall the relative version of the Symplectic Thom Conjecture \cite{gk:caps}: If $\Sigma$ is a symplectic surface in $(X,\omega)$ such that $\partial \Sigma$ is a transverse link in $(Y,\xi)$, then $\Sigma$ is genus-minimizing in its relative homology class. We can apply this to a Legendrian ribbon by pushing its interior into a collar neighborhood of $X$ to produce a properly embedded symplectic surface (by, for example, combining Lemma~5.1 and Example~4.3 of \cite{hayden:stein}).

\begin{proposition}\label{prop:genus}
Let $(X,\omega)$ be a convex symplectic filling of $(Y,\xi)$. If $R$ is a Legendrian ribbon in $Y$ with transverse boundary $L$, then $R$ is genus-minimizing in its relative homology class in $H_2(X,L)$. 
\hfill $\square$
\end{proposition}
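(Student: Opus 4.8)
The plan is to realize $R$ as a symplectic surface in $X$ and then invoke the relative Symplectic Thom Conjecture \cite{gk:caps}. Since $(X,\omega)$ is a convex filling, a collar neighborhood of $\partial X = Y$ is symplectomorphic to a piece of the symplectization $(Y \times (-\epsilon,0],\, d(e^s\alpha))$, where $\alpha$ is a contact form with $\ker \alpha = \xi$ and $Y$ is identified with $Y \times \{0\}$.

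First I would push the interior of $R$ into this collar to produce a properly embedded surface $\Sigma \subset X$ with $\partial\Sigma = L \subset \partial X$. Because the modification is supported away from $\partial R$, it is an isotopy rel boundary; in particular $\Sigma$ is smoothly isotopic to $R$, represents the same class in $H_2(X,L)$, and has the same genus. The crucial point is that the profile of this push-off can be chosen so that $\Sigma$ is symplectic. Here one exploits the defining structure of a Legendrian ribbon: after the perturbation of Lemma~\ref{lem:recognize}, the characteristic foliation on $R$ is Morse--Smale with only positive singularities, tangent to $\xi$ along the spine $\Lambda$ and transverse to $\xi$ on $R \setminus \Lambda$. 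Writing $\omega = e^s(ds \wedge \alpha + d\alpha)$ on the collar, one selects the height function $s$ on $\Sigma$ so that the $ds \wedge \alpha$ term compensates for the directions in which $d\alpha$ degenerates along the characteristic foliation, making $\omega|_\Sigma$ a positive area form. This is exactly the mechanism established in Lemma~5.1 and Example~4.3 of \cite{hayden:stein}, which I would cite to produce the symplectic surface $\Sigma$ with transverse boundary $L$.

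With $\Sigma$ in hand, the relative Symplectic Thom Conjecture \cite{gk:caps} immediately gives that $\Sigma$ is genus-minimizing in its relative homology class in $H_2(X,L)$. Since $\Sigma$ is isotopic rel boundary to $R$, and such an isotopy preserves both the relative class and the genus, the ribbon $R$ is genus-minimizing as well.

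I expect the main obstacle to be the symplectic push-off: one must verify that the characteristic foliation on $R$ admits a compatible choice of vertical profile rendering $\omega|_\Sigma > 0$ everywhere, in particular handling the transition across the spine $\Lambda$, where $R$ is tangent to the contact planes and the naive flat surface is only degenerately ``symplectic.'' The remaining steps --- the homological and genus invariance under the rel-boundary isotopy, and the application of the Thom-type theorem --- are formal.
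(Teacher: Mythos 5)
Your proposal matches the paper's proof: the paper likewise pushes the interior of the ribbon into a collar neighborhood to obtain a properly embedded symplectic surface with transverse boundary (citing exactly Lemma~5.1 and Example~4.3 of \cite{hayden:stein}) and then applies the relative Symplectic Thom Conjecture of \cite{gk:caps}. Your additional discussion of the symplectization profile fleshes out the step the paper delegates to those citations, so there is no gap.
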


In particular, if $X$ has vanishing second homology, then a nontrivial strongly quasipositive knot in $Y$ cannot bound a slice disk in $X$. This generalizes a result of Rudolph \cite{rudolph:qp-obstruction} for strongly quasipositive links in $S^3$.

\subsection{Strongly quasipositive satellites} We begin by proving Theorem~\ref{thm:satellite}, which says that if $J \subset S^1 \times D^2$ is a strongly quasipositive braid and $K \subset (Y,\xi)$ is a strongly quasipositive transverse link, then the transverse satellite $J(K) \subset (Y,\xi)$ is also a strongly quasipositive transverse link.

\begin{figure}[b] \center
\includegraphics[width=.85\linewidth]{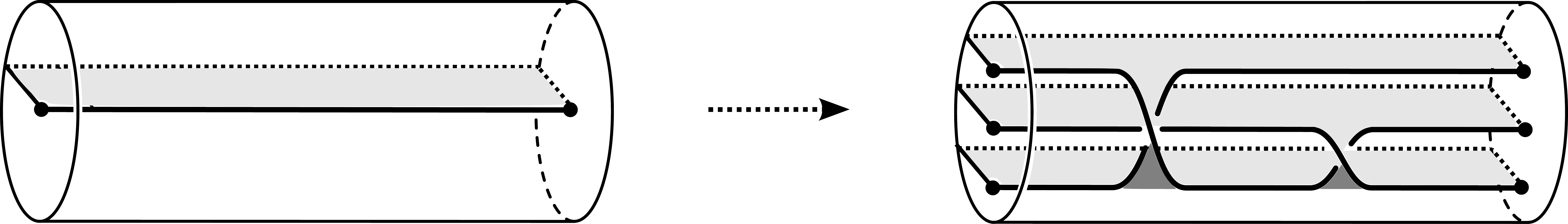} 
\caption{Taking a satellite whose pattern is a strongly quasipositive braid.}
 \label{fig:satellite}
\end{figure}

\begin{proof}[Proof of Theorem~\ref{thm:satellite}]
Fix a Legendrian ribbon $R$ bounded by $K$ in $(Y,\xi)$. The satellite is constructed using a contactomorphism between neighborhoods $V=N(U)$ and $N(K)$, where $U$ can be taken to be the standard transverse 1-braid bounding a disk $D$ perpendicular to the $z$-axis in $\rr^3$ with its rotationally symmetric contact structure.  Moreover, we can assume that the contactomorphism carries $D \cap N(U)$ to $R \cap N(K)$.

Choose a Bennequin surface $F$ for $J$ constructed from $n$ parallel copies of $D$ joined by positively twisted bands as in Figure~\ref{fig:satellite}. Letting $F_0$ denote $F \cap N(U)$, we note that the embedding $F_0 \subset N(U) \hookrightarrow N(K)$ respects the characteristic foliation. We can take a Reeb vector field for $\xi$ that is normal to $R$ and use it to obtain $n$ parallel copies of $R$, which we denote $R'$. We can then glue $R'$ to the copy of $F_0$ in $N(K)$ to obtain a Seifert surface for $J(K)$. The characteristic foliation this surface can be taken to satisfy the hypotheses of the ribbon flexibility lemma (Lemma~\ref{lem:recognize}), so we conclude that $J(K)$ bounds a Legendrian ribbon and is therefore strongly quasipositive.
\end{proof}

\begin{remark}
The argument above  extends to the case where $J$ is not necessarily a strongly quasipositive braid in $V$ but rather a strongly quasipositive link embedded in $V$ so that it bounds a ``relative'' Legendrian ribbon. More precisely, we need only require that $J$ cobound a surface $F_0$ with some number of transverse longitudes in $\partial V$ such that the characteristic foliation on $F_0$ is Morse-Smale, contains no negative singularities or attracting closed leaves, and points outward along $J$ and inward along the longitudes comprising $\partial F_0 \cap V$.
\end{remark}

We can use Theorem~\ref{thm:satellite} and the Bennequin-Eliashberg inequality to determine which cables of strongly quasipositive links are strongly quasipositive.

\begin{corollary}\label{cor:cables} Let $K\subset (Y,\xi)$ be a strongly quasipositive transverse link and let $(p,q)$ be a pair of  integers with $p\geq 1$. The transverse $(p,q)$-cable of $K$ is strongly quasipositive if and only if $q$ is non-negative.\end{corollary}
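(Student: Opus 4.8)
The plan is to prove the two implications separately: the ``if'' direction follows immediately from Theorem~\ref{thm:satellite}, while the ``only if'' direction reduces to a self-linking computation measured against a genus bound. Throughout I assume $p \geq 2$; for $p=1$ the pattern is isotopic to the core of $S^1 \times D^2$, so the cable is transversely isotopic to $K$ and carries no new information.

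For the forward direction, suppose $q \geq 0$. Then the $(p,q)$-torus link, regarded as a pattern in $S^1 \times D^2$, is the closure of the positive braid word $(\sigma_1 \cdots \sigma_{p-1})^q$; since every positive braid is a product of positive bands, this pattern is strongly quasipositive. Theorem~\ref{thm:satellite} then shows that the satellite $J(K)$, namely the transverse $(p,q)$-cable of $K$, is strongly quasipositive.

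For the reverse direction I would argue contrapositively: assuming $q<0$, I will show that $C := C_{p,q}(K)$ fails to achieve equality in the Bennequin--Eliashberg inequality~\eqref{eq:be}, which by the discussion following Conjecture~\ref{conj:bennequin} precludes strong quasipositivity. Two computations feed into this. First, a cabling formula for the self-linking number, $\sl(C) = p\,\sl(K) + (p-1)q$, which I would derive from the local model near $N(K)$: writing $K$ as a companion braid on $n$ strands of writhe $w$, the companion crossings contribute $p^2 w$ to the writhe of the cable while installing the $(p,q)$-framing adds $(q-pw)$ meridional twists of $(p-1)$ crossings each, against $pn$ total strands; the $p^2 w$ terms cancel, leaving $p(w-n) + (p-1)q = p\,\sl(K)+(p-1)q$. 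Second, since $K$ is strongly quasipositive its Bennequin surface is genus-minimizing with $\chi_{\max}(K) = -\sl(K)$. Forming the natural cable surface from $p$ parallel copies of this surface joined by $|q|(p-1)$ bands gives $\chi_{\max}(C) = p\,\chi_{\max}(K) - |q|(p-1)$, so that $-\chi_{\max}(C) = p\,\sl(K) + (p-1)|q|$. Comparing, $-\chi_{\max}(C) - \sl(C) = 2(p-1)|q| > 0$, so the bound is strict and $C$ is not strongly quasipositive.

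The crux is the genus input for the cable: I must know that the natural cable surface is genus-minimizing, i.e. $\chi_{\max}(C_{p,q}(K)) = p\,\chi_{\max}(K) - |q|(p-1)$, in a possibly non-$S^3$ ambient manifold $Y$. In $S^3$ this is Schubert's cable genus formula; in general I expect it to follow from the incompressibility of the torus $\partial N(K)$ together with sutured-manifold theory, using that the companion surface of the strongly quasipositive knot $K$ is itself genus-minimizing, or --- when $(Y,\xi)$ admits a convex symplectic filling --- directly from Proposition~\ref{prop:genus} and the relative adjunction inequality. Finally, I note that tightness of $(Y,\xi)$ is needed for~\eqref{eq:be} to apply; in an overtwisted structure the statement fails, exactly as the overtwisted corollary in the introduction predicts.
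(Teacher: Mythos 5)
Your proposal is correct and matches the paper's own proof essentially step for step: the forward direction is Theorem~\ref{thm:satellite} applied to the positive braid pattern $(\sigma_1\cdots\sigma_{p-1})^q$, and the reverse direction shows the Bennequin--Eliashberg bound fails to be sharp for $q<0$ by comparing the self-linking number against the Euler characteristic of the natural cable surface built from parallel copies of the companion's Bennequin surface. The genus-minimality input you identify as the crux is precisely the step the paper settles by citing Schubert \cite[\S21, Satz 1]{schubert}, so your explicit formulas $\sl(C)=p\,\sl(K)+(p-1)q$ and $\chi_{\max}(C)=p\,\chi_{\max}(K)-|q|(p-1)$ simply make quantitative what the paper states qualitatively (``the maximal Euler characteristic is unchanged but the self-linking number is decreased'').
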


\begin{proof}
Denote the $(p,q)$-cable of $K$ by $K_{(p,q)}$. We can construct a Seifert surface $S$ for $K_{(p,q)}$ as in the proof of Theorem~\ref{thm:satellite}, where the bands may be negatively twisted if $q$ is negative. By \cite[\S21, Satz 1]{schubert}, this is a minimal genus Seifert surface for $K_{(p,q)}$. When $q$ is non-negative, the pattern $T_{(p,q)}$ is a strongly quasipositive braid, so $K_{(p,q)}$ is strongly quasipositive by Theorem~\ref{thm:satellite}. In particular, the self-linking number of $K_{(p,q)}$ is $-\chi(S)$. On the other hand, if $q$ is negative, then the maximal Euler characteristic for $K_{(p,q)}$ is unchanged but the self-linking number is decreased. It follows that the Bennequin-Eliashberg bound fails to be sharp, so $K_{(p,q)}$ cannot be strongly quasipositive if $q$ is negative.
\end{proof}

\begin{remark}
The above corollary generalizes a result of Hedden \cite[Corollary~1.3]{hedden:pos} that determines the strong quasipositivity of certain iterated torus knots in $S^3$.
\end{remark}

The next two examples produce strongly quasipositive links using satellites whose patterns already bound Legendrian ribbons in $S^1 \times D^2$, generalizing constructions due to Rudolph \cite{rudolph:constructions2,rudolph:qp-obstruction}.

\begin{example}[Quasipositive annuli]\label{ex:annuli} Given a Legendrian knot $\Lambda$ in $(Y,\xi)$, its Legendrian ribbon is an annulus whose boundary is a strongly quasipositive transverse link. Following Rudolph, we refer to such a surface as a \emph{quasipositive annulus}.\end{example}

\begin{example}[Whitehead doubles]\label{ex:doubles}
With $\Lambda$ still denoting a Legendrian knot and $R_\Lambda$ its ribbon, we can plumb together $R_\Lambda$ with the ribbon of a small Legendrian unknot $U$ with maximal Thurston-Bennequin number $\tb(U)=-1$. The boundary of the resulting ribbon is a twisted Whitehead double of $\Lambda$, where the twisting is determined by the contact framing. In particular, if $\Lambda$ is nullhomologous, then we have constructed the $\tb(\Lambda)$-twisted Whitehead double of $\Lambda$.
\end{example}

As evidenced by Examples~\ref{ex:annuli} and \ref{ex:doubles}, the companion to a strongly quasipositive satellite knot is not necessarily strongly quasipositive. 

\begin{example}[Cables of rational bindings] For a different flavor of example, we can consider a rationally nullhomologous knot $K \subset Y$ whose complement fibers over the circle --- that is, $K$ is the binding of a rational open book. Since $K$ is nontrivial in $H_1(Y;\mathbb{Z})$, it cannot be strongly quasipositive with respect to any (integral) open book. On the other hand, $K$ has positive cables that are genuinely fibered and support the same contact structure; see \cite{bevhm}. It follows that these cables can be represented by strongly quasipositive braids with respect to the open books they define.
\end{example}

On the other hand, all of the above examples of strongly quasipositive satellite knots have strongly quasipositive patterns. This motivates the following question:

\begin{question}\label{ques:satellite}
If a satellite knot $J(K)$ is strongly quasipositive with respect to a tight contact manifold $(Y,\xi)$, then must the pattern knot $J$ be strongly quasipositive in $(S^3,\xi_\st)$?
\end{question}

We address a special case of this question further below in \S\ref{subsec:fibered}.

\subsection{Strongly quasipositive fibered links} \label{subsec:fibered}

We can address special cases of Conjecture~\ref{conj:bennequin} and Question~\ref{ques:satellite} for fibered links. First, note that the characteristic foliation on a page of an open book can be assumed to be Morse-Smale with no closed leaves and only positive singularities. Applying Lemma~\ref{lem:recognize} and Theorem~\ref{thm:sqp-ribbon}, it follows that the binding of an open book is always transversely isotopic to a strongly quasipositive braid with respect to any open book supporting the same contact structure. The statement of Corollary~\ref{cor:fibered} given in \S\ref{sec:intro} now follows from a result of Etnyre and Van Horn-Morris:

\begin{theorem}[\cite{E-VHM:fibered}] Let $K$ be a fibered link in a tight contact manifold $(Y,\xi)$ with zero Giroux torsion. Then the contact structure $\xi_K$ supported by $K$ is isotopic to $\xi$ if and only if  $K$ achieves equality in the Bennequin bound.
\end{theorem}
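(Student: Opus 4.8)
The plan is to establish the two implications separately: the forward direction ($\xi_K\simeq\xi \Rightarrow$ sharpness) follows immediately from the tools already in hand, while the reverse direction ($\text{sharpness} \Rightarrow \xi_K\simeq\xi$) carries all of the contact-geometric weight. For the forward direction, if $\xi_K$ is isotopic to $\xi$ then $(K,\pi)$ is an open book supporting $\xi$ and $K$ is, after isotopy, its binding. As noted just above, the binding of a supporting open book is transversely isotopic to a strongly quasipositive braid, so by the sharpness of \eqref{eq:be} for strongly quasipositive links (see \S\ref{subsec:genus}) it achieves equality in the Bennequin bound; alternatively, one checks directly that the contact framing of the binding equals the page framing, giving $\sl(K,[\Sigma]) = -\chi(\Sigma)$ for a page $\Sigma$.

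For the reverse direction, take a fiber $\Sigma$ as a Seifert surface for the transverse link $K$ and compute the self-linking number from the characteristic foliation $\xi|_\Sigma$. Since $\sl(K,[\Sigma])$ differs from $-\chi(\Sigma)$ by twice the number of negative singularities of a Morse-Smale characteristic foliation, equality in \eqref{eq:be} is equivalent to the foliation on $\Sigma$ having no negative singularities --- precisely the condition appearing in the ribbon flexibility lemma (Lemma~\ref{lem:recognize}). The first step is therefore to record that a fiber can be made convex with only positive singularities, so that the dividing set on $\partial N(K)$ consists of one boundary-parallel curve per component of $K$ and its slope coincides with the page slope. The second step uses this coincidence of framings to isotope $\xi$ near $K$ into agreement with $\xi_K$, so that the two contact structures share identical, open-book-compatible behavior on a neighborhood of the binding.

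The decisive third step is to upgrade agreement near the binding to a global isotopy. The complement $Y\setminus N(K)$ is the mapping torus of the monodromy of $\Sigma$, and it now carries two tight contact structures, $\xi$ and $\xi_K$, with the same convex boundary data. Slicing the fibration into convex pages reduces the comparison to the classification of tight contact structures on $\Sigma\times[0,1]$ rel fixed convex boundary; here the hypothesis of zero Giroux torsion guarantees that the prescribed dividing data is realized by a \emph{unique} such tight structure, namely the one Giroux-compatible with $(K,\pi)$. Giroux's correspondence identifies that structure with $\xi_K$, yielding $\xi\simeq\xi_K$.

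I expect this last uniqueness step to be the crux, and the zero-Giroux-torsion hypothesis is exactly what enables it. Without that hypothesis one could splice a Giroux torsion layer along a boundary-parallel incompressible torus in the complement, producing a tight contact structure that induces the same dividing slope on $\partial N(K)$ --- hence the same self-linking number and the same sharp Bennequin bound --- yet is not isotopic to $\xi_K$. Ruling out such torsion layers, and confirming that equality in \eqref{eq:be} constrains the boundary data tightly enough for the $\Sigma\times[0,1]$ classification to apply, is where the real work lies.
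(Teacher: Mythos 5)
First, a framing point: the paper does not prove this statement at all---it is quoted verbatim from \cite{E-VHM:fibered} and used as a black box to deduce Corollary~\ref{cor:fibered}. So your attempt can only be measured against the Etnyre--Van Horn-Morris proof itself. Your forward implication is fine (and matches the paper's surrounding discussion: the binding of a supporting open book is strongly quasipositive, hence achieves equality in \eqref{eq:be}). In the reverse direction, however, there are two problems. The first is minor but real: for a generic characteristic foliation one has $\sl(K,[\Sigma]) = -\chi(\Sigma) + 2(e_- - h_-)$, where $e_-$, $h_-$ count negative elliptic and hyperbolic points, so equality in \eqref{eq:be} gives $e_- = h_-$, \emph{not} the absence of negative singularities. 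To conclude that $\Sigma$ can be isotoped so its foliation has only positive singularities you need the tightness-based elimination argument (in a tight structure each negative elliptic point can be paired with and canceled against a negative hyperbolic point); your phrase ``differs by twice the number of negative singularities'' is false as stated and silently skips this step.

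The genuine gap is your third step. You invoke ``the classification of tight contact structures on $\Sigma\times[0,1]$ rel fixed convex boundary'' and assert that zero Giroux torsion forces uniqueness. No such classification theorem exists for a general fiber surface $\Sigma$: tight structures on $\Sigma\times I$ with fixed boundary data are typically plentiful, distinguished by the dividing sets of intermediate convex surfaces rather than by torsion alone, and in any case the complement $Y\setminus N(K)$ is a mapping torus twisted by the monodromy, not a product. (There is also a technical wrinkle in ``slicing the fibration into convex pages,'' since fibers have boundary on $K$.) What Etnyre and Van Horn-Morris actually prove is a structure theorem that is precisely the content you are assuming: once the fibers carry characteristic foliations with no negative singularities, the contact structure on the fibered complement is shown to agree with the open-book-supported structure \emph{up to adding Giroux torsion along incompressible tori}, after which the zero-torsion hypothesis yields $\xi \cong \xi_K$. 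Your closing intuition---that splicing a torsion layer along a boundary-parallel torus preserves the self-linking data and explains the necessity of the hypothesis---is correct and is exactly the phenomenon their theorem isolates; but the uniqueness step your argument leans on is the theorem to be proved, not an available lemma, so as written the proof is unsupported at its crux.
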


We can apply these ideas to \cite[Examples 5.6-5.7]{ik:bennequin}. Here, Ito and Kawamuro observe that any $(N,1)$-cable $K_{(N,1)}$ of the binding $K$ of an open book can be realized by a one-stranded braid that achieves Bennequin's bound but is not strongly quasipositive as a braid. As they remark, this does not contradict Conjecture~\ref{conj:bennequin} because $K_{(N,1)}$ may become strongly quasipositive after transverse isotopy. Indeed, Theorem~\ref{thm:sqp-ribbon} and Corollary~\ref{cor:cables} imply that $K_{(N,1)}$ is transversely isotopic to a strongly quasipositive braid with respect to the original open book.

We now address a special case of Question~\ref{ques:satellite}. Before stating the result, we recall that an oriented link $J$ in the solid torus $V$ is said to be \textbf{fibered in the solid torus} if its exterior fibers over $S^1$ in such a way that the boundary of each fiber $F$ consists of a single longitude on $\partial N(J)$ and some number of longitudes on $\partial V$. This fibration constitutes a \textbf{relative open book decomposition} of the solid torus as defined in \cite{vhm:thesis} (see also \cite{bevhm}). We say that a contact structure $\xi$ is compatible with the relative open book if there is a contact form $\alpha$ for $\xi$ such that $J$ is a positively transverse link, $d\alpha$ is a positive area form on each page of the relative open book, and the characteristic foliation of $\xi$ on $\partial V$ agrees with the foliation defined by the boundaries of the fibers. However, it is convenient to relax the boundary condition and demand only that the characteristic foliation on $\partial V$ be linear and that the Reeb vector field for $\alpha$ preserve $\partial V$ and be positively transverse to both the characteristic foliation and the foliation induced by the fibers. It is easy to see that if $\xi$ and $\xi'$ are contact structures compatible with a relative open book decomposition of the solid torus in the strict and relaxed sense, respectively, then $\xi$ is tight if and only if $\xi'$ is tight.

\begin{definition}
A link $J$ in the solid torus $V= S^1 \times D^2$ is  \textbf{tight fibered in the solid torus} if $(V,J)$ is fibered and the associated relative open book for $V$ supports a tight contact structure.
\end{definition}

\begin{lemma}\label{lem:tight}
A tight fibered link in the solid torus is a tight fibered link in $S^3$.
\end{lemma}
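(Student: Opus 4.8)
The plan is to realize $V$ as a Heegaard solid torus in $S^3$, cap off the relative open book to produce an honest open book for $S^3$ with binding $J$, and then check that the tight contact structure supported by the relative open book extends to a tight contact structure on $S^3$. Eliashberg's uniqueness theorem will then force this contact structure to be $\xi_\st$, which is exactly what it means for $J$ to be tight fibered in $S^3$.

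First I would set up the topology. Write $S^3 = V \cup V'$ for the genus-one Heegaard splitting, so that the complementary solid torus $V'$ meets $V$ along $\partial V$ with the meridian of $V'$ glued to the preferred longitude of $V$. By hypothesis the pages of the relative open book meet $\partial V$ in longitudes of $V$; under the gluing these are meridians of $V'$ and hence bound meridian disks in $V'$. Capping each page off with the corresponding family of meridian disks extends the fibration of $V \setminus J$ over the solid torus $V'$ (this is the disk-capping of the relative open book in the sense of \cite{vhm:thesis,bevhm}), producing a fibration of $S^3 \setminus J$ whose pages $\hat F$ have boundary only on $\partial N(J)$. This exhibits $J$ as a fibered link in $S^3$ with binding $J$ and page $\hat F$, and identifies the core $U_0$ of $V'$ as an unknot transverse to every page.

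Next I would extend the contact structure. Let $\alpha$ be a contact form for the tight contact structure $\xi_V$ compatible with the relative open book in the relaxed sense, so that $R_\alpha$ preserves $\partial V$ and is positively transverse to both the characteristic foliation and the page boundaries, with the characteristic foliation on $\partial V$ linear. I would glue onto $(V,\alpha)$ the standard tight neighborhood of a transverse unknot on $V'$, arranged so that its meridian disks are the capping disks, its core $U_0$ is positively transverse to these disks, and the form agrees with $\alpha$ along $\partial V$. Because the Reeb field is then positively transverse to the closed-up pages and $J$ remains a positively transverse binding, the resulting form $\hat\alpha$ on $S^3$ is compatible with the capped open book, so $\hat\xi = \ker\hat\alpha$ is supported by the open book from the first step.

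The crux is to show that $\hat\xi$ is tight. I would argue this by convex surface theory. After shrinking, $V' = N(U_0)$ is a standard tight solid torus whose convex boundary carries two dividing curves parallel to the meridian of $V'$, equivalently to the preferred longitude of $V$ and hence to the page boundaries; meanwhile the complement $S^3 \setminus N(U_0)$ is isotopic to $V$ carrying the tight $\xi_V$. Since the capping is performed by meridian disks, the dividing slope on $\partial V$ matches the page-boundary slope, so the two tight pieces are glued along a convex torus with compatible dividing sets; invoking the classification of tight contact structures on the solid torus one sees that $V'$ admits a unique tight filling with this boundary data and that the gluing admits no bypass along $\partial V$, so no overtwisted disk can arise. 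The main obstacle is precisely this tightness step, because tight pieces can in general glue to an overtwisted total space; the feature that makes it go through is that the capping is by meridian disks, which pins down the dividing set and rules out the bypasses responsible for overtwistedness. Once $\hat\xi$ is known to be tight, Eliashberg's uniqueness theorem for $S^3$ \cite{yasha:20yrs} gives $\hat\xi = \xi_\st$, so the open book of $J$ supports the tight contact structure, and $J$ is tight fibered in $S^3$.
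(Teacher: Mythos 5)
Your topological setup and contact extension match the paper exactly: the paper also caps the pages off with meridian disks sweeping out the complementary solid torus $W$ and glues on a contact structure so that the result is supported by the capped open book, citing \cite[Proposition 3.0.7]{vhm:thesis}. The problem is your tightness step, and it is a genuine gap. Gluing two tight pieces along a convex (or pre-Lagrangian) torus does \emph{not} in general yield a tight manifold --- this is exactly the notorious failure of tightness under gluing, illustrated for instance by stacking basic slices with opposite signs, or by the negatively stabilized open book of $S^3$, where standard tight pieces assemble into an overtwisted total space. Your assertion that ``the gluing admits no bypass along $\partial V$'' is not a consequence of anything you have established; it is a restatement of the conclusion you need. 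The uniqueness of the tight structure on $V'$ with the given boundary data is true but irrelevant: it constrains the structure on $V'$, not the existence of an overtwisted disk intersecting $\partial V$ in the glued manifold. Moreover, the known gluing theorems (Colin's, for example) require the gluing torus to be incompressible, whereas $\partial V$ is compressible on both sides in $S^3$, so no such theorem can be invoked here. Note also that your argument only uses tightness of the single structure $\xi_V$, never the relative open book compatibility in an essential way, which is a hint that something stronger is needed.

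The paper closes this gap by a contradiction argument that sidesteps gluing entirely: if the capped structure $\xi'$ were overtwisted, then since the core $U$ of $W$ is a transverse unknot, Etnyre's looseness result \cite[Corollary 2.3]{etnyre:overtwisted} (every transverse unknot in an overtwisted contact manifold has an overtwisted disk in its complement) produces an overtwisted disk in the exterior of an arbitrarily small standard neighborhood of $U$. But the structure on the exterior of any such neighborhood is again realized as a contact structure on $V$ supported by the relative open book $(J,\pi)$, contradicting the hypothesis that this relative open book supports only tight structures. If you want to rescue your write-up, replace the convex-surface-theory paragraph with this looseness argument; the rest of your proof is sound and agrees with the paper.
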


\begin{proof}
Let $(J,\pi)$ denote a the relative open book supporting a tight contact structure $\xi$ on the solid torus $V$. We can produce an open book $(J,\pi')$ for $S^3$ with binding $J$ by capping off each fiber $\pi^{-1}(\theta)$ with disks attached along $\pi^{-1}(\theta)\cap \partial V$. These disks sweep out a complementary solid torus $W$ to $V$ in $S^3$. We can equip $W$ with a contact structure in the obvious way so that it glues together with $(V,\xi)$ to produce a contact structure $\xi'$ on $S^3$ supported by the open book $(J,\pi')$; see \cite[Proposition 3.0.7]{vhm:thesis}. 

For the sake of contradiction, suppose that $\xi'$ is overtwisted. Let $U$ denote the braided transverse unknot in $(J,\pi')$ that forms the core of $W$. By \cite[Corollary 2.3]{etnyre:overtwisted}, the complement of any transverse unknot in an overtwisted contact structure contains an overtwisted disk. (That is, all transverse unknots in an overtwisted contact manifold are ``loose''.) It follows that there is an overtwisted disk in the exterior of some smaller standard neighborhood of $U$. But the contact structure on the exterior of any such neighborhood of $U$ can also be realized as a contact structure on $V$ supported by $(J,\pi)$. This contradicts the hypothesis that $J$ is tight fibered in the solid torus, so we conclude that $\xi'$ is in fact tight, i.e. $J$ is tight fibered in $S^3$.
\end{proof}

We now show that the answer to Question~\ref{ques:satellite} is \emph{yes} for fibered satellite knots in a tight contact structure. In fact, we can say more:

\begin{theorem}\label{thm:fibered}
If $J(K)$ is a fibered satellite knot in $Y$ supporting a tight contact structure, then the pattern $J \subset V$ is a tight fibered knot in the solid torus (and in $S^3$) and the companion $K \subset Y$ is a rationally fibered knot, where the interior of the rational fiber surface is an open Legendrian ribbon. 
\end{theorem}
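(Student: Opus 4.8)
The plan is to decompose the fiber surface of $J(K)$ along the companion torus $T=\partial N(K)$ and transfer the relevant contact-geometric structure to each side. Since $J(K)$ is fibered and supports the tight contact structure $\xi$, the observation opening \S\ref{subsec:fibered} lets us take its fiber surface $\Sigma$ (a page of the supporting open book) to be a Legendrian ribbon: writing $\Sigma_\theta$ for the pages, the characteristic foliation is Morse-Smale with only positive singularities and no closed leaves, and $\Sigma$ retracts onto a Legendrian spine by Lemma~\ref{lem:recognize}. The topological input is the classical structure theory of fibered satellites: because $J(K)$ is fibered, $T$ may be isotoped so that the fibration restricts to both complementary pieces, meeting each page $\Sigma_\theta$ in properly embedded arcs and curves that cut it into an inside piece $\Sigma_\theta \cap N(K)$ and an outside piece $\Sigma_\theta \cap (Y \setminus N(K))$. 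The inside pieces are the pages of a relative open book for $(V,J)$ in the sense of \cite{vhm:thesis}, realizing $J$ as fibered in the solid torus, and the outside pieces are the pages of a rational open book with binding $K$, realizing $K$ as a rationally fibered knot.

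For the pattern, I would promote this relative open book to a contact-geometric statement. After choosing a compatible contact form whose Reeb field preserves $T$ and induces a linear characteristic foliation there, the restriction of the global supporting data makes the relative open book for $(V,J)$ compatible --- in the relaxed sense of the definition preceding Lemma~\ref{lem:tight} --- with $\xi|_{N(K)}$. Since $(Y,\xi)$ is tight, its restriction to the sub-solid-torus $N(K)\cong V$ is tight, because an overtwisted disk inside $N(K)$ would already be one in $Y$. Hence $J$ is tight fibered in the solid torus, and Lemma~\ref{lem:tight} upgrades this to $J$ being tight fibered in $S^3$.

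For the companion, the rational open book produced above already exhibits $K$ as rationally fibered, so it remains to identify the interior of its rational fiber surface $R_K$ with an open Legendrian ribbon. This is immediate from the decomposition: $R_K$ is realized inside $\Sigma$ as (the relevant part of) an outside piece $\Sigma_\theta \cap (Y \setminus N(K))$, so as a subsurface of the Legendrian ribbon $\Sigma$ it inherits a characteristic foliation that is Morse-Smale with only positive singularities and no closed leaves, and it retracts onto a Legendrian graph under the flow tangent to $\xi$. Its only non-transverse boundary lies along $T$ rather than along the transverse link $J(K)$, so it is an \emph{open} Legendrian ribbon in exactly the sense of the remark following Theorem~\ref{thm:satellite}.

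The main obstacle I expect is the step of arranging $T$ to be simultaneously compatible with the fibration and with the contact structure. The topological isotopy making $T$ fiber-compatible and the contact-geometric normalization --- making $T$ convex, or Reeb-invariant with a linear characteristic foliation, so that each side is an honest relative open book supporting the restricted contact structure --- must be carried out together; verifying the relaxed relative-open-book compatibility along $T$, in particular that the Reeb field can be taken transverse to the pages on both sides while preserving $T$, is the technical heart of the argument. Once $T$ is in this good position, the splitting of the characteristic foliation and the transfer of tightness are routine given the tools assembled above.
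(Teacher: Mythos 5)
Your overall route coincides with the paper's: isotope $T=\partial N(K)$ to be transverse to the pages, split each page $F$ along $\Gamma=T\cap F$ into an inside piece giving a relative open book for $(V,J)$ and an outside piece giving a rational open book for $K$, restrict tightness to $N(K)\cong V$, and invoke Lemma~\ref{lem:tight}. But the step you defer as ``the technical heart'' is precisely the content of the paper's proof, and your fallback for the companion --- that the open-ribbon structure on the rational fiber is ``immediate from the decomposition'' --- does not hold without it. If you first arrange the page $\Sigma$ to be a Legendrian ribbon and only afterwards pass to the subsurface $F_2 = \Sigma \setminus N(K)$, there is no reason for the characteristic foliation of $\Sigma$ to be transverse to $\Gamma$, let alone to point out of $F_2$: leaves can be tangent to $\Gamma$ or cross it in both directions, in which case $F_2$ neither retracts onto a Legendrian graph under the characteristic flow nor satisfies the boundary behavior needed for Lemma~\ref{lem:recognize}, and likewise the inside piece fails the (relaxed) compatibility condition for a relative open book. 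So ``inherits'' is exactly the unproved claim.

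The paper closes this gap not by perturbing a pre-existing contact structure near $T$ but by building the contact form from scratch. Following \cite{eisenbud-neumann} and \cite{FLP}, it first uses a flow preserving $T$ to choose a monodromy representative $\varphi$ fixing $\Gamma$, $F_1$, and $F_2$ setwise, and then runs the Thurston--Winkelnkemper construction with a $1$-form $\beta$ on $F$ chosen to be $\varphi$-invariant near $\Gamma$. This yields a compatible contact form $\alpha$ for which the characteristic foliation on \emph{every} page is transverse to $\Gamma$ and points out of $F_2$, and whose Reeb field preserves $T$ and is positively transverse to both foliations on $T$ --- exactly the ``good position'' you postulated but did not construct. (Giroux uniqueness of supported contact structures transfers tightness of $\xi$ to $\ker\alpha$, so your tightness-restriction step then goes through; note also that pointing the foliation out of $F_2$ requires every component of $F_1$ to meet $\partial F$, which is why $F_1$ is taken to be the connected piece containing $\partial F$.) Your proposal correctly locates the difficulty, but with no argument at that point --- and with the companion case resting on the false inheritance claim --- it is an outline of the paper's proof rather than a proof.
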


\begin{proof}
We begin by mirroring an argument from \cite[\S1.4]{eisenbud-neumann} and \cite[Expos{\'e} 14]{FLP}.  The incompressible torus $T = \partial N(K)$ in the exterior of $J(K)$ can be isotoped so that it meets the pages of the open book transversely. Let $F$ denote a fixed page. Using a flow on the exterior of $J(K)$ that preserves $T$, we can find a monodromy representative $\varphi: F\to F$ that fixes (setwise) the collection of homotopically nontrivial curves $\Gamma=T \cap F$. Since $T$ separates $Y$, the curves $\Gamma$ separate $F$ into subsurfaces $F_1$ and $F_2$. Here we take $F_1$ to be the connected subsurface containing $\partial F$ and $F_2$ to be the (possibly disconnected) subsurface in the interior of $F$. The monodromy must fix each of $F_1$ and $F_2$ setwise;  we denote by $\varphi_1$ and $\varphi_2$ the restriction of $\varphi$ to $F_1$ and $F_2$, respectively. For suitable choices in the Thurston-Winkelnkemper construction (including choosing a 1-form $\beta$ on $F$ that is $\varphi$-invariant near $\Gamma$), we can find a contact form $\alpha$ compatible with $(F,\varphi)$ such that the characteristic foliation on each page is transverse to $\Gamma$ and points out of the subsurface $F_2$. (This would no longer be possible if there were connected components of $F_1$ that did not meet $\partial F$.) We can also assume that the Reeb vector field preserves $T$ and is positively transverse to the foliations on $T$ induced by the contact structure and by the pages of the open book.

The desired conclusions now follow easily from this setup: First, we see that $(F_1,\varphi_1)$ is an abstract relative open book for the solid torus $N(K) \cong V$ with binding $J \subset V$. By construction, $(F_1,\varphi_1)$ supports the contact structure obtained by restricting $\alpha$, which must be tight because $\alpha$ defines a tight contact structure on $Y$. By Lemma~\ref{lem:tight}, it follows that $J$ is a tight fibered knot when viewed in $S^3$. Second, we have exhibited the exterior of $K$ in $Y$ as a mapping torus of $\varphi_2: F_2\to F_2$, and the characteristic foliation on each fiber can be assumed to satisfy Lemma~\ref{lem:recognize}. 
\end{proof}

Restricting our attention to knots in $S^3$, we obtain:

\begin{corollary}\label{cor:satellite}
If the satellite knot $J(K)$ in $S^3$ is a fibered strongly quasipositive knot, then both the pattern $J$ and companion $K$ are fibered strongly quasipositive knots.
\end{corollary}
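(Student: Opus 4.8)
The plan is to derive Corollary~\ref{cor:satellite} as the special case of Theorem~\ref{thm:fibered} in which $Y = S^3$ with its standard tight contact structure $\xi_\st$. The first step is to recall that by Theorem~\ref{thm:sqp-ribbon} together with Hedden's theorem (referenced in \S\ref{sec:intro}), a fibered link in $S^3$ is strongly quasipositive if and only if its open book supports $\xi_\st$, i.e.\ if and only if it is a tight fibered knot. So the hypothesis that $J(K)$ is a fibered strongly quasipositive knot is precisely the statement that $J(K)$ is a fibered knot in $S^3$ whose supported contact structure is tight. This places us squarely within the hypotheses of Theorem~\ref{thm:fibered}, since $(S^3, \xi_\st)$ is tight.

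With this translation in hand, the second step is to invoke Theorem~\ref{thm:fibered} directly. It yields two conclusions: the pattern $J \subset V$ is a tight fibered knot in the solid torus (hence, by Lemma~\ref{lem:tight}, a tight fibered knot in $S^3$), and the companion $K \subset S^3$ is a \emph{rationally} fibered knot whose rational fiber interior is an open Legendrian ribbon. The pattern conclusion is already in the desired form: a tight fibered knot in $S^3$ is, by Hedden's theorem again (or Theorem~\ref{thm:sqp-ribbon}), the same as a fibered strongly quasipositive knot. So $J$ is fibered strongly quasipositive, as claimed.

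The main obstacle is the companion $K$: Theorem~\ref{thm:fibered} only produces a \emph{rationally} fibered structure, whereas the corollary asserts that $K$ is (honestly) fibered and strongly quasipositive. The key observation is that for a knot $K$ in $S^3$ (as opposed to a general $Y$), rational nullhomology is automatic and in fact every knot in $S^3$ is nullhomologous, so the ``rational'' fiber surface for $K$ obtained from $\varphi_2 \colon F_2 \to F_2$ is an honest Seifert surface and $K$ is an honestly fibered knot. Here I would point out that the boundary of the mapping torus of $\varphi_2$ lies on $T = \partial N(K)$ and that, because $K \subset S^3$, the fiber $F_2$ meets this boundary torus in a single longitude, so the fibration of the exterior of $K$ is a genuine (integral) open book with binding $K$.

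Finally, to upgrade this fiberedness to strong quasipositivity, I would use the ribbon structure recorded in Theorem~\ref{thm:fibered}: the characteristic foliation on each fiber of the mapping torus of $\varphi_2$ satisfies the hypotheses of the ribbon flexibility lemma (Lemma~\ref{lem:recognize}), so each fiber is isotopic to a Legendrian ribbon. By Theorem~\ref{thm:sqp-ribbon}, it follows that $K$ is strongly quasipositive with respect to its own open book, and since that open book supports a tight contact structure on $S^3$ (being the restriction of the tight structure compatible with $(F,\varphi)$), we conclude that $K$ is a fibered strongly quasipositive knot. Combining the two conclusions gives that both $J$ and $K$ are fibered strongly quasipositive, completing the proof.
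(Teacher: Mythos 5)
Your handling of the pattern $J$ is exactly the paper's: apply Theorem~\ref{thm:fibered}, pass through Lemma~\ref{lem:tight}, and convert ``tight fibered in $S^3$'' to ``fibered strongly quasipositive'' via Hedden's characterization. The gap is in the companion step, where you assert that because $K \subset S^3$, ``the fiber $F_2$ meets this boundary torus in a single longitude,'' so that the mapping torus of $\varphi_2$ is already a genuine open book with binding $K$. This is false in general. The proof of Theorem~\ref{thm:fibered} explicitly allows $F_2$ to be disconnected, and it typically is: if $J$ is a $(p,q)$-cable pattern with $p \geq 2$, the fiber surface of $J(K)$ meets the exterior of $K$ in $p$ parallel copies of the fiber surface of $K$, so $F_2$ meets $T = \partial N(K)$ in $p$ parallel longitudes. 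Nullhomology of $K$ in $S^3$ only guarantees that each boundary curve of $F_2$ on $T$ is a Seifert longitude; it does not make there be one of them. Consequently $\partial F_2 \cap T$ is a link of $p$ parallel longitudinal push-offs of $K$, not a single push-off, and applying Lemma~\ref{lem:recognize} and Theorem~\ref{thm:sqp-ribbon} to all of $F_2$ would only show that this $p$-component link is strongly quasipositive --- not $K$ itself.

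The missing ingredient is the paper's observation that a rationally fibered knot in $S^3$ is genuinely fibered and, moreover, that each rational fiber is a disjoint union of genuine fiber surfaces. One then chooses a single connected component of $F_2$: its boundary is a single longitude on $T$, so it is a Seifert surface for a push-off transversely isotopic to $K$; its characteristic foliation still satisfies the hypotheses of Lemma~\ref{lem:recognize}, so it is isotopic to a Legendrian ribbon, and Theorem~\ref{thm:sqp-ribbon} then gives that $K$ is a fibered strongly quasipositive knot. With this one-component selection inserted in place of your ``single longitude'' claim, your argument closes and coincides with the paper's proof.
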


\begin{proof}
The first part of the statement follows from Theorem~\ref{thm:fibered} and Hedden's characterization of tight fibered knots in $S^3$ as fibered strongly quasipositive knots  \cite[Proposition~2.1]{hedden:pos}. As for the companion knot, we recall that any rationally fibered knot $K$ in $S^3$ must in fact be genuinely fibered. Moreover, each rational fiber is a disjoint union of genuine fiber surfaces. Choosing any individual connected component of the rational fiber surface for $K$ guaranteed by Theorem~\ref{thm:fibered}, we obtain a Seifert surface for a push-off transversely isotopic to $K$ that is isotopic to a Legendrian ribbon. It follows that $K$ is a strongly quasipositive fibered knot.
\end{proof}

\begin{remark}
See \cite{diaz:thesis} for more discussion of braided satellites and quasipositivity.
\end{remark}

\subsection{Braid operations and quasipositivity}\label{subsec:destab} Our final application of these ideas concerns the notion of \emph{Stein quasipositive braids} in a Stein-fillable open book.  For the sake of brevity here, we refer the reader to \cite{hayden:stein} for a precise definition. In $(D^2,\id)$, these braids are  precisely Rudolph's quasipositive braids.

\begin{figure}[b] \center
\def\svgwidth{.6\linewidth}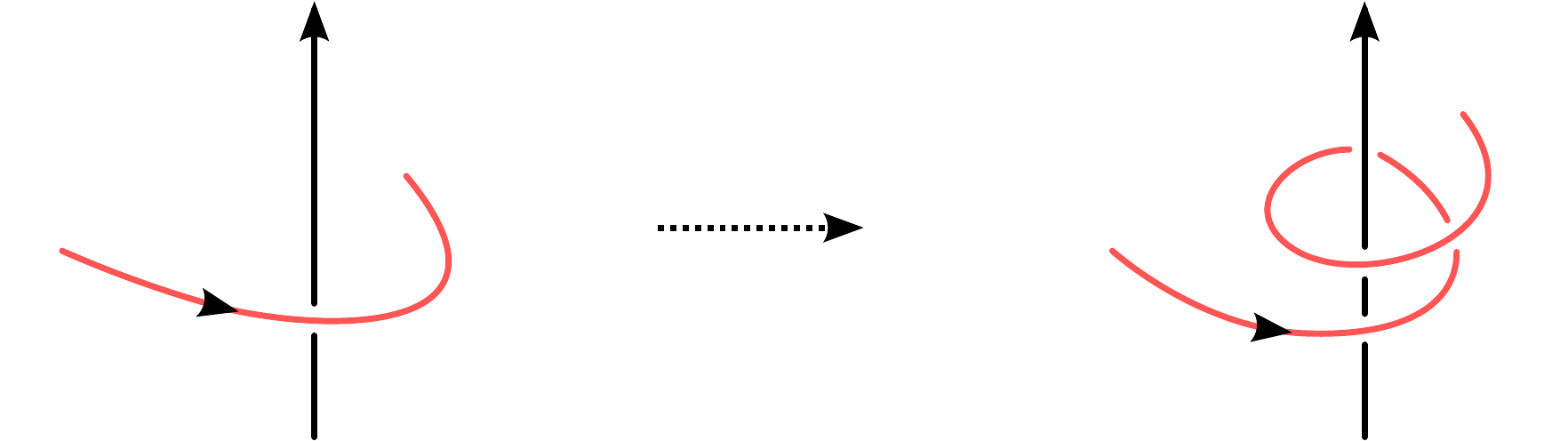 
\caption{A local depiction of positive Markov stabilization near the binding $B$.}
 \label{fig:stab}
\end{figure}

By the transverse Markov theorem (\cite{pavelescu:thesis}), any transverse isotopy between transverse braids in an open book can be realized by a sequence of positive Markov stabilizations and destabilizations; see Figure~\ref{fig:stab} for a local depiction of a positive Markov stabilization. As observed in \cite{hayden:stein}, it follows immediately from definitions that positive Markov stabilization preserves Stein quasipositivity at the level of braid isotopy. However, the converse is more subtle. In the classical case, we have:

\begin{theorem}[{Orevkov \cite{orevkov:markov}}]
Positive Markov destabilization preserves quasipositivity for braids in the open book $(D^2,\id)$ for $S^3$.
\end{theorem}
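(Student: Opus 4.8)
The plan is to route the statement through the symplectic model of quasipositivity, paralleling the genus-bound techniques of \S\ref{subsec:genus}. Recall that a quasipositive braid $\gamma \in B_m$ gives rise to a \emph{quasipositive surface} assembled from $m$ disks and positive bands, one band per band generator; pushed into a collar of $B^4$ this surface is symplectic, hence genus-minimizing by the relative symplectic Thom conjecture, exactly as in the proof of Proposition~\ref{prop:genus}. Writing $\beta = \iota(\beta_0)\sigma_n \in B_{n+1}$ for the positive stabilization of $\beta_0 \in B_n$, the quasipositivity of $\beta$ yields a quasipositive surface $\Sigma$ built from $n+1$ disks and $k$ bands, so that $\sl(\hat\beta) = -\chi(\Sigma) = k-(n+1)$ is the maximal self-linking number, realized by the maximal Euler characteristic $\chi_4(\hat\beta) = (n+1)-k$.

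The first step is bookkeeping. Positive Markov destabilization is a transverse isotopy of closures, so $\hat\beta$ and $\hat\beta_0$ are transversely isotopic; in particular $\sl(\hat\beta_0) = \sl(\hat\beta)$ and $\chi_4(\hat\beta_0) = \chi_4(\hat\beta) = (n+1)-k$. A quasipositive factorization of $\beta_0$, were one to exist, would use $n$ disks and some number $k'$ of bands, producing a genus-minimizing surface of Euler characteristic $n-k'$; matching the maximal Euler characteristic forces $k' = k-1$. Thus the target is a quasipositive word for $\beta_0$ with exactly one fewer band, equivalently a quasipositive surface for $\hat\beta_0$ obtained from $\Sigma$ by deleting one disk and one band.

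The geometric heart of the argument is to produce this reduction directly. I would realize the destabilization as a degree-lowering operation on the associated \emph{braided surface}: the stabilizing crossing $\sigma_n$ corresponds to a single positive band joining the $(n+1)$-st sheet to the rest, and the goal is to strip away that sheet together with its band while keeping the remaining bands positive and braided over the $n$-disk configuration. Symplectically this is a controlled isotopy of the symplectic surface reducing its braided degree; combinatorially it is a rewriting of the quasipositive factorization that removes every occurrence of the eliminated strand from the conjugating words. Either description should output a quasipositive presentation of $\beta_0$.

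The main obstacle is precisely this last reduction, and it is where the real content of Orevkov's theorem lies. The difficulty is that the given quasipositive factorization of $\beta$ is an \emph{arbitrary} word in $B_{n+1}$: although $\beta_0$ itself uses only $\sigma_1,\dots,\sigma_{n-1}$, the conjugators $w_j$ in a presentation $\beta = \prod_j w_j \sigma_{i_j} w_j^{-1}$ may freely involve $\sigma_n$ and genuinely use the extra strand, so naively deleting the $(n+1)$-st sheet need not leave a quasipositive word, nor even a word in $B_n$. Euler-characteristic and self-linking bookkeeping alone cannot force the reduction; one must invoke the rigidity of the symplectic (equivalently, algebraic-curve) model of Boileau--Orevkov, or Orevkov's original inductive argument via the slice--Bennequin inequality, to guarantee that the degree can be lowered without introducing negative bands. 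I would therefore expect to lean on the sharpness of the genus bound in Proposition~\ref{prop:genus} together with an analysis of how the symplectic surface meets the pages of the open book, using minimality to rule out the configurations in which the extra strand is essential.
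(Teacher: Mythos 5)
There is an important mismatch of expectations here: the paper does not prove this statement at all. It is imported verbatim from Orevkov \cite{orevkov:markov} and serves only as a foil for the proposition that follows (that destabilization \emph{fails} to preserve Stein quasipositivity in more general open books). So the only question is whether your proposal stands as a proof on its own, and by your own admission it does not. The step you call ``the geometric heart'' is exactly the theorem, and your paragraph addressing it closes by invoking ``the rigidity of the symplectic (equivalently, algebraic-curve) model of Boileau--Orevkov, or Orevkov's original inductive argument'' --- that is, by citing the result you set out to prove. Everything you actually carry out is correct but contentless toward the conclusion: transverse invariance of $\sl$ and genus-minimality of quasipositive surfaces show that \emph{if} $\beta_0$ admits a quasipositive factorization, it must have exactly $k-1$ bands; this conditional bookkeeping constrains the shape of a factorization but cannot produce one.

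Concretely, the proposed closing mechanism would not close the gap. Leaning on ``sharpness of the genus bound in Proposition~\ref{prop:genus}'' cannot work: sharpness of the Bennequin-type bound is a \emph{necessary} condition for (strong) quasipositivity, and its sufficiency is precisely Conjecture~\ref{conj:bennequin}, open even in $(S^3,\xi_\st)$; moreover, the genus-minimizing surface for $\hat\beta_0$ guaranteed by such bounds in $B^4$ carries no braided or symplectic positional structure relative to the $n$-strand axis, so nothing forces it to be a band surface realizing a quasipositive word in $B_n$. You correctly diagnose the combinatorial obstruction --- the conjugators $w_j$ in $\beta = \prod_j w_j\sigma_{i_j}w_j^{-1}$ may genuinely use $\sigma_n$, so sheet deletion fails --- but the remedy is not Euler-characteristic counting. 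The essential input in Orevkov's proof is the analytic characterization of quasipositivity: the stabilized closure bounds a piece of algebraic curve (Rudolph), and the Boileau--Orevkov theorem, that a braid whose closure bounds a pseudoholomorphic curve braided over a projection is quasipositive, supplies the degree-lowering step after deforming the braided structure from degree $n+1$ to degree $n$. That theorem is substantive pseudoholomorphic-curve theory, not a consequence of genus bounds, and without it (or an equivalent input, which your outline does not supply) there is no mechanism to rule out that $\beta_0$ is non-quasipositive while $\iota(\beta_0)\sigma_n$ is quasipositive. As written, the proposal is a correct reduction of the problem to itself.
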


It follows that if $K$ and $K'$ are transversely isotopic braids in $(D^2,\id)$, then $K$ is a Stein quasipositive braid if and only if $K'$ is a Stein quasipositive braid. However, this fails to be true even in the next-simplest cases:

\begin{proposition}
If $(F,\varphi)$ is a Stein-fillable open book for $Y=\#^k S^1 \times S^2$ where $F \neq D^2$ has connected boundary, then positive Markov destabilization fails to preserve Stein quasipositivity of braids. 
\end{proposition}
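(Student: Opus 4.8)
The plan is to show that the binding of the open book already exhibits this failure. On one hand, the binding $B$ is always strongly quasipositive: the characteristic foliation on a page can be taken to be Morse--Smale with only positive singularities and no closed leaves, so Lemma~\ref{lem:recognize} together with Theorem~\ref{thm:sqp-ribbon} shows that $B$ is transversely isotopic to a strongly quasipositive braid $\beta$, which is in particular Stein quasipositive (its positive bands are Stein-positive). On the other hand, I would argue that the presentation of $B$ as a \emph{one-stranded} braid---its transverse push-off, winding once around the binding, which I call $\beta_0$---is \emph{not} Stein quasipositive. Granting this, the proposition follows formally: by the transverse Markov theorem \cite{pavelescu:thesis}, the braids $\beta$ and $\beta_0$ are related by a sequence of braid isotopies and positive Markov (de)stabilizations. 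Braid isotopy and positive stabilization both preserve Stein quasipositivity, so if every positive destabilization in the sequence also preserved it, then Stein quasipositivity would propagate inductively from $\beta$ to $\beta_0$---a contradiction. Hence some positive destabilization must fail to preserve Stein quasipositivity. (In passing, this shows that Stein quasipositivity is not a transverse invariant for these open books, in contrast to the case of $(D^2,\mathrm{id})$.)

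To see that $\beta_0$ is not Stein quasipositive, I would invoke the characterization behind the definition in \cite{hayden:stein}: a Stein quasipositive $n$-braid is one bounding a \emph{positively braided surface} in the Stein filling $X$, that is, a surface transverse to the fibers of the associated Lefschetz fibration $X \to D^2$, meeting each regular fiber in $n$ points and carrying only positive branch points. For $n = 1$ no branch points are possible, so such a surface would be an honest section of the fibration; its existence forces the underlying loop of $\beta_0$ in the fiber $F$ to be nullhomotopic. But the underlying loop of $\beta_0$ is boundary-parallel, representing $[\partial F] \in \pi_1(F)$, and since $F \neq D^2$ has connected boundary we have $[\partial F] = \prod_i [a_i,b_i] \neq 1$ in the free group $\pi_1(F)$. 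Thus no such section exists, and $\beta_0$ is not Stein quasipositive.

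The main obstacle is precisely this last step---certifying non-quasipositivity of the one-stranded braid---and it is exactly where the hypotheses enter. The assumption $F \neq D^2$ is what makes $[\partial F]$ essential in $\pi_1(F)$; for $F = D^2$ the binding is the unknot and its one-stranded braid bounds a section disk, so it is quasipositive, consistent with Orevkov's theorem. Making the argument fully rigorous requires two checks: that the transverse push-off of the binding really is the one-stranded braid tracing $\partial F$, and that this loop remains essential in the filling, i.e. $[\partial F] \neq 1$ in $\pi_1(X)$. The latter is immediate when the monodromy is trivial---as for the standard open book $(\Sigma_{g,1},\mathrm{id})$ of $\#^{2g} S^1 \times S^2$, where $X \simeq F \times D^2$ and $\pi_1(X) \cong \pi_1(F)$---and in general reduces to computing the fundamental group of the filling modulo the vanishing cycles. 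Since the trivial-monodromy open books already furnish examples for a cofinal family of such $Y$, they suffice to establish the proposition.
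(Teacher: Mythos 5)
Your overall architecture is the same as the paper's: realize the binding as a strongly quasipositive (hence Stein quasipositive) braid via Lemma~\ref{lem:recognize} and Theorem~\ref{thm:sqp-ribbon}, show its one-stranded representative is \emph{not} Stein quasipositive, and let the transverse Markov theorem force some positive destabilization to fail. The divergence — and the gap — is in how you certify non-quasipositivity of the 1-braid. Your $\pi_1$-obstruction is correct and attractively elementary when $\varphi = \id$: a one-sheeted positively braided surface has no branch points, hence is a section of $F \times D^2 \to D^2$, i.e.\ the graph of a map $D^2 \to F$, and this forces the boundary loop $[\partial F] = \prod_i [a_i,b_i]$ to die in the free group $\pi_1(F)$, which it does not. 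But the proposition quantifies over \emph{every} Stein-fillable open book $(F,\varphi)$ with $F \neq D^2$ and connected boundary, and your closing move — that trivial-monodromy open books ``furnish a cofinal family of such $Y$'' — does not discharge that quantifier: the claim is about all open books, not about realizing all $Y$. (Note also that $(F,\id)$ with connected boundary only yields $\#^{2g} S^1 \times S^2$, so even at the level of manifolds the odd-$k$ cases, including $S^1 \times S^2$ itself, are untouched.)

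Worse, the $\pi_1$ route genuinely breaks for some allowed inputs, so the general case is not a routine ``computation of $\pi_1$ modulo vanishing cycles.'' Take $F = \Sigma_{1,1}$ and $\varphi = \tau_a$ a single positive Dehn twist along a nonseparating curve $a$: the Weinstein 2-handle attached to $F \times D^2$ along $a$ cancels a 1-handle, so the filling is $X \cong S^1 \times B^3$ and $Y = S^1 \times S^2$ — a legitimate instance of the hypotheses. Here $\pi_1(X) = \pi_1(F)/\langle\langle a \rangle\rangle \cong \mathbb{Z}$ and $[\partial F] = [a,b] \mapsto 1$, so the boundary loop of the 1-braid is nullhomotopic in $X$ and your homotopy-theoretic obstruction to a section evaporates. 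This is precisely where the paper's proof uses different, stronger input: a hypothetical Stein quasipositive 1-braid is converted into a smoothly embedded slice disk in $X \cong \natural^k S^1 \times B^3$ (the disk from the trivial-monodromy case persists because the 2-handles are attached away from it), and this disk is ruled out by Proposition~\ref{prop:genus} — genus-minimality of Legendrian ribbons, via the relative symplectic Thom conjecture — together with $H_2(X) = 0$, since the binding bounds a positive-genus Legendrian ribbon, namely the page. So your trivial-monodromy argument is a nice elementary substitute in that special case, but to prove the proposition as stated you need the symplectic genus bound (or an equivalent slice obstruction) to handle monodromies with nontrivial positive factorizations; fundamental-group considerations alone cannot.
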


\begin{proof}
The binding $K=\partial F$ is transversely isotopic to a 1-braid with respect to $(F,\varphi)$, viewed as a (1,1)-cable of $K$ with respect to the framing determined by the fiber $F$. It suffices to show that this braid is not Stein quasipositive but becomes so after some number of positive stabilizations.

For the latter claim, we note that any strongly quasipositive braid with respect to a Stein-fillable open book is necessarily Stein quasipositive. As discussed above, the binding of an open book admits a strongly quasipositive braid representative with respect to the open book itself, so we know that $K$ has a Stein quasipositive braid representative with respect to the Stein-fillable open book $(F,\varphi)$. By the transverse Markov theorem, the 1-braid and this Stein quasipositive braid have a common positive Markov stabilization. Since any positive Markov stabilization of a Stein quasipositive braid is again Stein quasipositive, we see that the 1-braid itself becomes Stein quasipositive after some number of positive Markov stabilizations.

To conclude the proof, we suppose for the sake of contradiction that the 1-braid representative of $K$ is indeed Stein quasipositive.  If the monodromy $\varphi$ is the identity, then a Stein quasipositive 1-braid must necessarily bound an embedded disk $D$ in $(F,\id)$. Since $(F,\id)$ naturally arises as the boundary of a subcritical Stein domain $X_0$, we can push  we can the interior of $D$ into a collar neighborhood of $X_0$. Otherwise, $\varphi$ consists of positive Dehn twists performed along homologically nontrivial simple closed curves in $F$. These correspond to modifying $(F,\id)$ by performing Dehn surgery along Legendrian curves in $F$ with framing $-1$ relative to the page framing. These surgeries can be viewed as the result of attaching Stein 2-handles to $X_0$ to produce $Y$ with the open book $(F,\varphi)$. Since the 2-handles are attached away from the 1-braid and thus away from $D$, we obtain a disk in a Stein filling $X$ of $Y$ bounded by the original 1-braid. But this leads to a contradiction of Proposition~\ref{prop:genus} (and the observation that follows it): Any Stein filling $X$ of $Y=\#^k S^1 \times S^2$ is diffeomorphic to $\natural^k S^1 \times B^3$ (see, e.g., \cite{ec:book}), which has vanishing second homology. It follows that a nontrivial strongly quasipositive knot in $(Y,\xi)$ cannot bound a slice disk in $X$, so we conclude that the 1-braid representative of $K$ cannot be Stein quasipositive.
\end{proof}

\bibliographystyle{alpha}
\bibliography{biblio}
\end{document}